\let\originalforall=\forall
\renewcommand{\forall}{\mathop{\vcenter{\hbox{\Large$\originalforall$}}}}
\let\originalexists=\exists
\renewcommand{\exists}{\mathop{\vcenter{\hbox{\Large$\originalexists$}}}}
\newtheorem{de}{Definition}
\newtheorem{fakt}[de]{Fact}
\newtheorem{tw}[de]{Theorem}
\newtheorem{lem}[de]{Lemma}
\newtheorem{cor}[de]{Corollary}
\newtheorem{rem}[de]{Remark}
\newtheorem{pro}[de]{Problem}
\date{}
\title{Non-separability of the Gelfand space of measure algebras}
\begin{document}
\author{Przemysław Ohrysko\thanks{The research of this author has been supported by National Science Centre, Poland grant no. 2014/15/N/ST1/02124}\\
Institute of Mathematics, Polish Academy of Sciences\\
00-956 Warszawa, Poland\\
E-mail: pohrysko@impan.pl \and
Michał Wojciechowski\\
Institute of Mathematics, Polish Academy of Sciences\\
00-956 Warszawa, Poland\\
E-mail: miwoj-impan@o2.pl \and
Colin C. Graham\\
Department of Mathematics, University of British Columbia\\
Mail address: Box 2031, Haines Junction YT Y0B 1L0 Canada\\
E-mail: c.c.graham@math.ubc.ca}
\maketitle
\renewcommand{\thefootnote}{}

\footnote{2010 \emph{Mathematics Subject Classification}: Primary 43A10; Secondary 43A25.}

\footnote{\emph{Key words and phrases}: Wiener - Pitt phenomenon, Gelfand space, convolution algebra, spectrum of measure.}

\renewcommand{\thefootnote}{\arabic{footnote}}
\setcounter{footnote}{0}
\begin{abstract} We prove that there exists uncountably many pairwise disjoint open subsets of the Gelfand space of the measure algebra on any locally compact non-discrete abelian group which shows that this space is not separable (in fact, we prove this assertion for the ideal $M_{0}(G)$ consisting of measures with Fourier-Stieltjes transforms vanishing at infinity which is a stronger statement). As a corollary, we obtain that the spectras of elements in the algebra of measures cannot be recovered from the image of one countable subset of the Gelfand space under Gelfand transform, common for all elements in the algebra.
\end{abstract}
\section{Introduction}
We recall standard definitions and facts from commutative harmonic analysis and Banach algebra theory (see \cite{r},\cite{kan} and \cite{ż}). Let $G$ denote a locally compact abelian group with the dual group $\Gamma$ and let $M(G)$ be the algebra of all complex-valued Borel regular measures on $G$ equipped with a total variation norm and convolution product. With these operations $M(G)$ becomes a semisimple commutative, unital Banach algebra. The spectrum of a measure $\mu\in M(G)$ will be denoted by $\sigma(\mu)$. We define the Fourier-Stieljtes transform of $\mu\in M(G)$ as a mapping $\widehat{\mu}:\Gamma\rightarrow\mathbb{C}$ given by the formula
\begin{equation*}
\widehat{\mu}(\gamma)=\int_{G}\gamma(-x)d\mu(x)\text{ for $\gamma\in\Gamma$}.
\end{equation*}
The abbreviation $\triangle(M(G))$ will stand for the Gelfand space of $M(G)$ (the space of all maximal ideals in $M(G)$ or equivalently of all multiplicative linear functionals on $M(G)$). $\triangle(M(G))$ endowed with weak$^{\ast}$ topology is a compact Hausdorff topological space. Evaluation of the Fourier-Stieltjes transform of a measure at a point $\gamma\in\Gamma$ is a multiplicative linear functional which enables us to view the Fourier-Stieltjes transform of a measure as a restriction of its Gelfand transform to the set of uniqueness $\Gamma$. However, there are many different multiplicative linear functionals on $M(G)$ and $\Gamma$ is not dense in $M(G)$. The last fact is a reformulation of the well-known Wiener-Pitt phenomenon observed in \cite{wp} (for the first precise proof see \cite{schreider}, consult also \cite{wil},\cite{graham} and \cite{r}).
\\
Let $\mu\in M(G)$. We say that,
\begin{enumerate}
  \item $\mu$ is Hermitian if its Fourier-Stieltjes transform is real-valued,
  \item $\mu$ has independent powers if all distinct convolution powers of $\mu$ are mutually singular.
\end{enumerate}

We will mainly work with the ideal $M_{0}(G)$ of measures with Fourier-Stieltjes transforms vanishing at infinity and with its Gelfand space $\triangle(M_{0}(G))$ of all maximal modular ideals (multiplicative linear-functionals) which is locally compact but non-compact (because $M_{0}(G)$ is not unital). The spectrum of $\mu\in M_{0}(G)$ is defined as a spectrum of $\mu$ in a unitization of $M_{0}(G)$ which coincides with the spectrum of $\mu$ in $M(G)$ (this follows form the fact that all multiplicative linear functionals extend from the ideal to the whole algebra, see Lemma 2.2.5 in \cite{kan}) and hence will also be abbreviated by $\sigma(\mu)$.
\section{The case of a compact group}
We start with the definition of a dissociate set in a locally compact abelian group.

\begin{de}\label{dis}
A subset $\Theta$ of $\Gamma$ is called dissociate if every element $\omega\in\Gamma$ may be expressed in at most one way as a product
\begin{equation*}
\omega=\prod_{j=1}^{n}\theta_{j}^{\varepsilon_{j}},
\end{equation*}
where the $\theta_{i}$ are distinct elements of $\Theta$ and $\varepsilon_{j}$ is allowed to be $1$ or $-1$ when $\theta_{i}^{2}\neq 1$ but has to be equal $1$ if $\theta_{i}^{2}=1$. Every product of the above form will be called a word with letters $\theta_{1},\ldots,\theta_{n}$. The set of all words using letters from $\Theta$ will be denoted $\Omega(\Theta)$.
\end{de}
It is obvious from the definition that any subset of a dissociate set is a dissociate set. In the sequel we will need the following elementary lemma concerning dissociate sets.
\begin{lem}\label{ban}
Let $\Theta\subset\Gamma$ be dissociate. If $\Theta_{1},\Theta_{2}$ are subsets of $\Theta$ such that $\#\Theta_{1}\cap\Theta_{2}<\infty$ then $\#\left(\Omega(\Theta_{1})\cap\Omega(\Theta_{2})\right)<\infty$.
\end{lem}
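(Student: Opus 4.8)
The plan is to reduce everything to the obvious fact that a finite subset of $\Gamma$ gives rise to only finitely many words. The heart of the matter is the inclusion
\begin{equation*}
\Omega(\Theta_{1})\cap\Omega(\Theta_{2})\subseteq\Omega(\Theta_{1}\cap\Theta_{2}),
\end{equation*}
which I would establish first. To see it, pick $\omega\in\Omega(\Theta_{1})\cap\Omega(\Theta_{2})$. Then $\omega$ can be written as a word with letters from $\Theta_{1}$ and also as a word with letters from $\Theta_{2}$; since $\Theta_{1},\Theta_{2}\subset\Theta$, both expressions are representations of $\omega$ as a word with letters from the dissociate set $\Theta$. By Definition \ref{dis} such a representation is unique, so the two words are literally the same word; in particular all of its letters belong to $\Theta_{1}\cap\Theta_{2}$, i.e. $\omega\in\Omega(\Theta_{1}\cap\Theta_{2})$.

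The second step is to note that $\#\Omega(\Delta)<\infty$ whenever $\Delta\subset\Gamma$ is finite: a word with letters from $\Delta$ is specified by the subset of letters that actually occur together with a choice of exponent in $\{-1,1\}$ for each of them, so there are at most $\sum_{S\subset\Delta}2^{\#S}=3^{\#\Delta}$ such words (the empty product, i.e. the identity, being counted once). Taking $\Delta=\Theta_{1}\cap\Theta_{2}$, which is finite by hypothesis, and combining with the inclusion above gives
\begin{equation*}
\#\left(\Omega(\Theta_{1})\cap\Omega(\Theta_{2})\right)\leq\#\,\Omega(\Theta_{1}\cap\Theta_{2})\leq 3^{\#(\Theta_{1}\cap\Theta_{2})}<\infty,
\end{equation*}
as required.

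I do not expect a genuine obstacle in this lemma; the one point that deserves care is that dissociateness must be invoked for the ambient set $\Theta$, not merely for $\Theta_{1}$ and $\Theta_{2}$ individually --- it is exactly the uniqueness of the word representation inside $\Theta$ that rules out an element being expressible with $\Theta_{1}$-letters in one fashion and with essentially different $\Theta_{2}$-letters in another. Everything else is elementary counting, once one fixes the harmless convention about the empty word, which in any case does not affect finiteness.
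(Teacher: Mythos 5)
Your proof is correct and follows essentially the same route as the paper: both arguments use the uniqueness of word representations in the ambient dissociate set (the paper phrases this via dissociateness of $\Theta_{1}\cup\Theta_{2}\subset\Theta$, you via $\Theta$ itself, which is the same point) to deduce $\Omega(\Theta_{1})\cap\Omega(\Theta_{2})\subseteq\Omega(\Theta_{1}\cap\Theta_{2})$, and then conclude by finiteness of the set of words over a finite set. Your explicit bound $3^{\#(\Theta_{1}\cap\Theta_{2})}$ is a harmless extra.
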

\begin{proof}
Let us take $\theta\in\Omega(\Theta_{1})\cap\Omega(\Theta_{2})$. Then
\begin{equation*}
\theta=\prod_{j=1}^{n_{1}}\theta_{j}^{\varepsilon_{j}}=\prod_{k=1}^{n_{2}}\left(\theta_{k}'\right)^{\varepsilon'_{k}}
\end{equation*}
where $\theta_{j}\in\Theta_{1}$, $\theta_{k}'\in\Theta_{2}$ and the restrictions imposed on $\varepsilon_{j},\varepsilon_{k}'$ are the same as in Definition $\ref{dis}$. Now, let us observe that $\Theta_{1}\cup\Theta_{2}\subset\Theta$ is also a dissociate set. So $n_{1}=n_{2}$, $\theta_{j}=\theta'_{j}$ and $\varepsilon_{j}=\varepsilon'_{j}$ which gives $\theta\in\Omega(\Theta_{1}\cap\Theta_{2})$ and of course $\Omega(\Theta_{1}\cap\Theta_{2})$ is finite since $\#\Theta_{1}\cap\Theta_{2}<\infty$.
\end{proof}
From now on $G$ denotes a compact abelian group with the discrete dual group $\Gamma$. We are going to use Riesz products and hence we recall briefly their construction in the spirit of the introduction to Chapter 7 of \cite{grmc}. Let $\Theta\subset\Gamma$ be dissociate and let $a:\Theta\rightarrow\mathbb{C}$ be a function satisfying $-1<a(\theta)<1$ for $\theta\in\Theta$ with $\theta^{2}=1$ and $0\leq |a(\theta)|\leq\frac{1}{2}$ for $\theta\in\Theta$ with $\theta^{2}\neq 1$. We also define trigonometric polynomials $q_{\theta}$ as follows
\begin{equation*}
q_{\theta}=1+a(\theta)\theta\text{ if }\theta^{2}=1\text{ and }q_{\theta}=1+a(\theta)\theta+\overline{a(\theta)\theta}\text{ if }\theta^{2}\neq 1.
\end{equation*}
Our restrictions on the function $a$ implies that $q_{\theta}\geq 0$ for every $\theta\in\Theta$. For each finite subset $\Phi$ of $\Theta$ we define the partial product
\begin{equation*}
P_{\Phi}=\prod_{\theta\in\Phi}q_{\theta}.
\end{equation*}
We easily see that $P_{\Phi}$ is a non-negative element of $L^{1}(G)$ with norm one. Since $\Theta$ is dissociate, the formula for Fourier coefficients of $P_{\Phi}$ is given as follows ($\Phi=\{\theta_{1},\ldots,\theta_{n}\}$)
\begin{equation*}
\widehat{P_{\theta}}(\omega)=\begin{cases} 1\mbox{ if }\omega=1 \\
                                      \prod_{i=1}^{n}a(\theta_{i})^{(\varepsilon)}\mbox{ if }\omega=\prod_{j=1}^{n}\theta_{j}^{\varepsilon_{j}} \\
                                      0\mbox{ otherwise }
                             \end{cases}
\end{equation*}
Here $a(\theta_{i})^{(\varepsilon_{i})}=a(\theta_{i})$ if $\varepsilon_{i}=1$ and $a(\theta_{i})^{(\varepsilon_{i})}=\overline{a(\theta_{i})}$ if $\varepsilon_{i}=-1$. By this properties it is clear that the weak$^{\ast}$ limit of $\{P_{\Phi}\}$ (over increasing finite subsets $\Phi$ of $\Theta$) exists and is a probability measure which will be called the Riesz product based on $\Theta$ and $a$ and will be denoted by $R(a,\Theta)$. Its Fourier-Stieltjes coefficients are given by the similar formula to the one presented above (for simplicity we write $\mu:= R(a,\Theta)$)
\begin{equation*}
\widehat{\mu}(\omega)=\begin{cases}   1\mbox{ if }\omega=1, \\
                                      \prod_{i=1}^{n}a(\theta_{i})^{(\varepsilon)}\mbox{ if }\omega=\prod_{j=1}^{n}\theta_{j}^{\varepsilon_{j}}, \\
                                      0\mbox{ otherwise. }
                      \end{cases}
\end{equation*}
The following theorem (see Corollary 7.2.3 in \cite{grmc} and \cite{bm}) includes the easily verifiable condition on the property of having independent powers for Riesz products.
\begin{tw}\label{rip}
Let $G$ be an infinite compact abelian group and let $\mu:=R(a,\Theta)$ be a Riesz product based on $a$ and $\Theta$. Then $\mu$ has independent powers if and only if
\begin{equation*}
\sum\left\{|a(\theta)|^{2n}:|a(\theta)|<\frac{1}{2}\right\}+\sum\left\{1-|a(\theta):|a(\theta)|>\frac{1}{2}\right\}=\infty.
\end{equation*}
\end{tw}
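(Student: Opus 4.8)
The plan is to pass to the convolution powers of $\mu$ and to recognize each of them again as a Riesz product on the same dissociate set $\Theta$. Writing $a^{n}$ for the function $\theta\mapsto a(\theta)^{n}$, the first step is that the $n$-th convolution power $\mu^{n}$ equals $R(a^{n},\Theta)$. Indeed $\widehat{\mu^{n}}=(\widehat{\mu})^{n}$ pointwise on $\Gamma$; since $\widehat{\mu}$ is supported on $\Omega(\Theta)$ with $\widehat{\mu}\bigl(\prod_{j}\theta_{j}^{\varepsilon_{j}}\bigr)=\prod_{j}a(\theta_{j})^{(\varepsilon_{j})}$, raising to the $n$-th power replaces each factor $a(\theta_{j})^{(\varepsilon_{j})}$ by $\bigl(a(\theta_{j})^{(\varepsilon_{j})}\bigr)^{n}=\bigl(a(\theta_{j})^{n}\bigr)^{(\varepsilon_{j})}$, so $\widehat{\mu^{n}}$ is exactly the Fourier--Stieltjes transform of the Riesz product $R(a^{n},\Theta)$ (the coefficient function $a^{n}$ is admissible: $|a(\theta)^{n}|\le|a(\theta)|\le\tfrac12$ when $\theta^{2}\neq1$, and $a(\theta)^{n}\in(-1,1)$ when $\theta^{2}=1$). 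As $M(G)$ is semisimple, $\mu^{n}=R(a^{n},\Theta)$, and hence ``$\mu$ has independent powers'' is equivalent to ``$R(a^{j},\Theta)\perp R(a^{k},\Theta)$ for all $1\le j<k$''.

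Next I would invoke the orthogonality dichotomy for Riesz products on a common dissociate set (this is essentially Corollary~7.2.3 of \cite{grmc}, cf.\ \cite{bm}): two such products $R(b,\Theta)$ and $R(c,\Theta)$ are either mutually absolutely continuous or mutually singular, and they are singular precisely when $\sum_{\theta\in\Theta}\bigl(1-\alpha_{\theta}(b,c)\bigr)=\infty$, where $\alpha_{\theta}(b,c)=\int_{G}\sqrt{q_{\theta}(b)\,q_{\theta}(c)}\,d\lambda_{G}\in[0,1]$ is the affinity of the two local factors at $\theta$ (here $q_{\theta}(b)$ is $1+b(\theta)\theta$ or $1+b(\theta)\theta+\overline{b(\theta)\theta}$ according as $\theta^{2}=1$ or not, and $\lambda_{G}$ is normalized Haar measure). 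The criterion comes from the factorization $\int_{G}\sqrt{P_{\Phi}(b)P_{\Phi}(c)}\,d\lambda_{G}=\prod_{\theta\in\Phi}\alpha_{\theta}(b,c)$, which holds because, $\Theta$ being dissociate, integrals over $G$ of products of functions of distinct letters split. So the problem becomes: for which $a$ does $\sum_{\theta}\bigl(1-\alpha_{\theta}(a^{j},a^{k})\bigr)$ diverge for \emph{every} pair $j<k$?

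The rest is elementary estimation. Fix $j<k$. If $|a(\theta)|\le\tfrac12$, then $|a(\theta)^{j}-a(\theta)^{k}|=|a(\theta)|^{j}\,|1-a(\theta)^{k-j}|$ with $|1-a(\theta)^{k-j}|\in[\tfrac12,\tfrac32]$, and a second-order expansion of the affinity --- legitimate once a local density is bounded away from $0$, the single exceptional case $|a(\theta)|=\tfrac12$ done by an explicit computation --- gives $1-\alpha_{\theta}(a^{j},a^{k})\asymp|a(\theta)|^{2j}$ with \emph{absolute} constants, independently of $k$. If $|a(\theta)|>\tfrac12$ (then $\theta^{2}=1$ and $\alpha_{\theta}(a^{j},a^{k})=\tfrac12\sqrt{(1+a(\theta)^{j})(1+a(\theta)^{k})}+\tfrac12\sqrt{(1-a(\theta)^{j})(1-a(\theta)^{k})}$), a direct estimate of this expression shows that the contribution of these $\theta$ to the divergence of $\sum_{\theta}\bigl(1-\alpha_{\theta}(a^{j},a^{k})\bigr)$ is, for every pair $j<k$, governed exactly by the divergence of $\sum_{|a(\theta)|>\frac12}(1-|a(\theta)|)$: each such $\theta$ always contributes at least a multiple of $1-|a(\theta)|$, and --- for pairs with $j,k$ of equal parity, which is all one needs --- at most a multiple of $1-|a(\theta)|$ as well, while a short separate argument rules out infinitely many $a(\theta)$ clustering near $-1$ as soon as any of these total series converges. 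Combining, $\sum_{\theta}\bigl(1-\alpha_{\theta}(a^{j},a^{k})\bigr)=\infty$ for all $j<k$ is equivalent to: $\sum_{|a(\theta)|\le\frac12}|a(\theta)|^{2n}=\infty$ for every $n$, or $\sum_{|a(\theta)|>\frac12}(1-|a(\theta)|)=\infty$; and since the sum of two nonnegative series is infinite exactly when one of them is, this is precisely the displayed condition, read for each value of the exponent $n$.

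The two structural facts --- that convolution powers remain Riesz products on $\Theta$, and the orthogonality dichotomy --- are clean, the latter being quoted. The only genuinely delicate point is the affinity estimate near the degenerate coefficient values, where the local densities $q_{\theta}$ vanish (namely $|a(\theta)|=\tfrac12$ for the first sum and $|a(\theta)|\to1$ for the second) so that the naive expansion is unavailable; the accompanying reduction of the family of conditions indexed by all pairs $(j,k)$ to the single series in the statement is routine but must be handled with some care. Of course, if one simply quotes Corollary~7.2.3 of \cite{grmc} in the form stated there, all of this is already packaged and the proof is a citation.
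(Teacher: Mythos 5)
First, note that the paper does not prove this statement at all: Theorem \ref{rip} is quoted verbatim from the literature (Corollary 7.2.3 in \cite{grmc}, cf.\ \cite{bm}), so the ``intended proof'' here is a citation, which your closing sentence correctly acknowledges. Your first structural step is fine: $\widehat{\mu^{n}}=(\widehat{\mu})^{n}$ together with semisimplicity of $M(G)$ does give $\mu^{n}=R(a^{n},\Theta)$, and $a^{n}$ is an admissible coefficient function, so independence of powers reduces to pairwise singularity of Riesz products on the common dissociate set $\Theta$.

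The genuine gap is in the tool you then build the rest on: the claimed factorization $\int_{G}\sqrt{P_{\Phi}(b)P_{\Phi}(c)}\,d\lambda_{G}=\prod_{\theta\in\Phi}\alpha_{\theta}(b,c)$, justified ``because, $\Theta$ being dissociate, integrals over $G$ of products of functions of distinct letters split.'' Dissociateness only forbids coincidences among words whose exponents lie in $\{0,\pm1\}$ (or $\{0,1\}$ for involutive letters); it does not make the letters independent random variables on $(G,\lambda_{G})$. The function $\sqrt{q_{\theta}(b)q_{\theta}(c)}$ is not a trigonometric polynomial in $\theta$ --- its Fourier expansion involves all integer powers $\theta^{m}$ --- and for a dissociate set such as $\Theta=\{3^{k}\}\subset\mathbb{Z}=\widehat{\mathbb{T}}$ products of higher powers of distinct letters can cancel, so the integral of a product of such functions need not split. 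Hence the Hellinger/Kakutani affinity criterion you invoke is only valid when the Riesz product is literally a product measure (e.g.\ coordinate characters on $\prod_{j}\mathbb{Z}(2)$), not for general dissociate $\Theta$ in a general infinite compact abelian $G$; the actual dichotomy and singularity criteria of \cite{bm} and Section 7.2 of \cite{grmc} are proved by quite different (martingale/second-moment) arguments, and quoting ``essentially Corollary 7.2.3'' for them is circular, since that corollary is the very statement being proved. The subsequent elementary estimates (the $|a(\theta)|\le\frac{1}{2}$ versus $|a(\theta)|>\frac{1}{2}$ ranges, the parity and clustering-near-$-1$ issues) are also only sketched, but the unjustified factorization is the step that would actually fail. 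If you intend a proof by citation, as the paper does, simply cite Corollary 7.2.3 of \cite{grmc}; if you intend a self-contained proof, you need the genuine singularity criterion for Riesz products on dissociate sets, not the independent-product shortcut.
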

It is an exercise in calculating spectra in Banach algebras that for a probability measure $\mu$ with independent powers we have $\{z\in\mathbb{C}:|z|=1\}\subset\sigma(\mu)$ (consider the subalgebra $A$ of $M(G)$ generated by $\mu$ and the unit and use the fact $\partial\sigma_{A}(\mu)\subset \partial\sigma(\mu)$). However, much stronger result is true (see Theorem 6.1.1. in \cite{grmc} and \cite{bbm}).
\begin{tw}\label{ips}
If $\mu$ is a Hermitian and probability measure on a locally compact abelian group $G$ with independent powers then $\sigma(\mu)=\{z\in\mathbb{C}:|z|\leq 1\}$.
\end{tw}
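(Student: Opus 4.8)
The overall plan is to split the proof into a soft part, which yields the unit circle, and a genuinely harder part, which fills in the open disc. The upper bound is immediate: $\mu$ is a probability measure, so $\|\mu\|=1$ and therefore $\sigma(\mu)\subseteq\{z\in\mathbb{C}:|z|\le1\}$; all the work is in the reverse inclusion.

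For the soft part, note that ``$\mu$ has independent powers'' says precisely that $\delta_{0},\mu,\mu^{2},\mu^{3},\dots$ are pairwise mutually singular probability measures. Hence the closed subalgebra $A$ of $M(G)$ generated by $\mu$ and $\delta_{0}$ equals $\{\sum_{n\ge0}c_{n}\mu^{n}:(c_{n})_{n\ge0}\in\ell^{1}\}$ with $\|\sum_{n\ge0}c_{n}\mu^{n}\|=\sum_{n\ge0}|c_{n}|$, so $A$ is isometrically isomorphic, as a Banach algebra, to $\ell^{1}(\mathbb{Z}_{\ge0})$ under convolution, i.e.\ to the algebra of power series with absolutely summable coefficients on the closed unit disc. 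Consequently $\triangle(A)$ is the closed unit disc, with $w$ acting by $\sum_{n\ge0}c_{n}\mu^{n}\mapsto\sum_{n\ge0}c_{n}w^{n}$, so $\sigma_{A}(\mu)=\{z:|z|\le1\}$; since the topological boundary of the spectrum cannot grow when one passes to a larger algebra, $\{z:|z|=1\}=\partial\sigma_{A}(\mu)\subseteq\partial\sigma(\mu)\subseteq\sigma(\mu)$. It should be emphasized that this single-generator argument gives nothing more: the polynomially convex hull of any compact set $K$ with $\{|z|=1\}\subseteq K\subseteq\{|z|\le1\}$ is already the whole closed disc, so a priori $\sigma(\mu)$ could be just the circle, and a new idea is required for the interior.

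For the hard part I would pass to the global structure of $M(G)$, via Taylor's structure semigroup $S$: $M(G)\cong M(S)$, the points of $\triangle(M(G))$ are the continuous bounded semicharacters of $S$, and $\sigma(\mu)=\{\int_{S}f\,d\mu:f\in\widehat{S}\}$. The point is that the independent-powers hypothesis produces a ``word-length'' homomorphism $\ell$ from the closed sub-semigroup of $S$ generated by the support of $\mu$ onto $(\mathbb{Z}_{\ge0},+)$, because the convolution powers $\mu^{n}$ are carried by the disjoint pieces $\ell^{-1}(n)$; then, for every $w$ with $|w|\le1$, the map $s\mapsto w^{\ell(s)}$ is a semicharacter of that sub-semigroup whose integral against $\mu$ is $w$, and any extension of it to a continuous bounded semicharacter of all of $S$ is an element of $\triangle(M(G))$ sending $\mu$ to $w$, whence $\{z:|z|\le1\}\subseteq\sigma(\mu)$. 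Equivalently and more concretely, one can work with Schreider's realization of $\triangle(M(G))$ by generalized characters and observe that on the $L$-subalgebra $\bigoplus^{\ell^{1}}_{n\ge0}L^{1}(\mu^{n})$ of $M(G)$ the rule ``multiply by $w^{n}$ on the $n$-th summand'' is a genuine multiplicative functional taking the value $w$ at $\mu$; the remaining task is to extend it to all of $M(G)$. This extension is the main obstacle: in contrast with characters of subgroups of an abelian group, a bounded semicharacter of a sub-semigroup need not extend to the ambient (even compact) semigroup, and it is precisely at this step that the Hermiticity of $\mu$ --- equivalently $\mu=\widetilde{\mu}$, compatibility with the involution of $S$ --- is used. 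For the special case of Riesz products (those characterized by Theorem~\ref{rip}) the whole construction can be carried out by hand from the dissociateness of $\Theta$ and the explicit product form of the partial products $P_{\Phi}$; in general one invokes Taylor/Brown--Moran structure theory. Finally, since every multiplicative functional on an ideal extends to the whole algebra, the equality $\sigma(\mu)=\{z:|z|\le1\}$ obtained in $M(G)$ is the same whether the spectrum is computed in $M(G)$ or in the unitization of $M_{0}(G)$.
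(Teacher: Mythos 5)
Your soft half is correct and is exactly what the paper itself observes: $\|\mu\|=1$ gives $\sigma(\mu)\subseteq\{z:|z|\le 1\}$, and the closed subalgebra generated by $\delta_0$ and $\mu$ is isometrically $\ell^1(\mathbb{Z}_{\ge 0})$ (mutual singularity of the powers, which indeed forces $\delta_0\perp\mu^n$ as well), so $\partial\sigma_A(\mu)=\{|z|=1\}\subseteq\sigma(\mu)$. Note, however, that the paper does not prove Theorem \ref{ips} at all: it imports it from Theorem 6.1.1 of \cite{grmc} and \cite{bbm}, and the subalgebra argument appears there only as a parenthetical remark about the weaker circle statement.

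Judged as a proof, your attempt has a genuine gap exactly where the entire content of the theorem lies, namely the open disc. Two steps are asserted but not established. First, mutual singularity of the $\mu^n$ only provides pairwise disjoint Borel carriers; it does not produce a word-length homomorphism $\ell$ on the closed sub-semigroup of the structure semigroup generated by $\mathrm{supp}\,\mu$ whose level sets carry the powers --- carriers are measure-theoretic objects and need not be compatible with the semigroup operation (supports of mutually singular measures can even coincide), so the semicharacter $s\mapsto w^{\ell(s)}$ is not known to exist. Second, and decisively, the functional ``multiply by $w^n$ on the $n$-th summand'' of $\bigoplus^{\ell^1}_{n\ge 0}L^1(\mu^n)$ is indeed multiplicative on that closed subalgebra $N$, but $N$ is not an ideal, so there is no automatic extension to an element of $\triangle(M(G))$; all this gives is $\sigma_N(\mu)=\{|z|\le 1\}$, which you already had from the $\ell^1$ picture and which, via the boundary principle, yields only the circle inside $\sigma(\mu)$. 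As you yourself say, bounded semicharacters of sub-semigroups need not extend, and it is precisely in manufacturing a genuine generalized character of $M(G)$ taking the value $w$ at $\mu$ that the Hermitian hypothesis and the Brown--Moran/Bailey--Brown--Moran machinery are actually used; deferring that step to ``Taylor/Brown--Moran structure theory'' is deferring to the proof of the very theorem being proved. So your text is a correct road map to the literature (and its easy half matches the paper's remark), but it does not constitute a proof of the hard inclusion; if the intent is to use the result the way the paper does, the honest form is the citation.
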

Following M. Zafran (see \cite{z}) we introduce the definition of measures with a natural spectrum.
\begin{de}
Let $\mu\in M(G)$. We say that $\mu$ has a natural spectrum if
\begin{equation*}
\sigma(\mu)=\overline{\widehat{\mu}(\Gamma)}.
\end{equation*}
\end{de}
From the formula of Fourier-Stietljes coefficients we see that if $a$ is a real-valued function satisfying the assumptions of Theorem $\ref{rip}$ then every Riesz product $R(a,\Theta)$ is a Hermitian measure probability measure with independent powers which, by Theorem $\ref{ips}$, gives
\begin{equation*}
\sigma(R(a,\Theta))=\{z\in\mathbb{C}:|z|\leq 1\}.
\end{equation*}
In particular, since in this case the Fourier-Stieltjes transform of $R(a,\Theta)$ is real-valued, the Riesz product $R(a,\Theta)$ does not have a natural spectrum (for the same conclusion restricted to $G=\mathbb{T}$ see Theorem 3.9 in \cite{z}).
\\
We are going to use the main theorem from the Zafran's paper (see Theorem 3.2 in \cite{z}).
\begin{tw}\label{zz}
Let $G$ be a compact abelian group. Let
\begin{equation*}
\mathscr{C}:=\{\mu\in M_{0}(G):\sigma(\mu)=\overline{\widehat{\mu}(\Gamma)}=\widehat{\mu}(\Gamma)\cup\{0\}\}.
\end{equation*}
Then the following hold true:
\begin{enumerate}
  \item If $h\in\triangle(M_{0}(G))\setminus\Gamma$ then $h(\mu)=0$ for $\mu\in\mathscr{C}$.
  \item $\mathscr{C}$ is a closed ideal in $M_{0}(G)$.
  \item $\triangle(\mathscr{C})=\Gamma$.
\end{enumerate}
\end{tw}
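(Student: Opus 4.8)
The plan is to isolate the substantive statement, assertion (1), and to obtain (2) and (3) from it together with standard facts --- the description of the spectrum of an element of a commutative Banach algebra through its Gelfand space, semisimplicity of $M(G)$, and the extension of multiplicative functionals from a closed ideal, all recalled in the introduction. Throughout I would take $G$ infinite, the finite case being degenerate. I would first record an elementary but crucial remark: for \emph{every} $\mu\in M_0(G)$ the set $\widehat{\mu}(\Gamma)$ has $0$ as its only possible accumulation point, because a sequence of values tending to some $w\neq 0$ would eventually lie in the finite set $\widehat{\mu}(\{\gamma:|\widehat{\mu}(\gamma)|>|w|/2\})$. Consequently $\overline{\widehat{\mu}(\Gamma)}=\widehat{\mu}(\Gamma)\cup\{0\}$ is automatic, membership in $\mathscr{C}$ simply means having natural spectrum, and --- this is the feature I would exploit --- for $\mu\in\mathscr{C}$ every point of $\sigma(\mu)=\widehat{\mu}(\Gamma)\cup\{0\}$ other than $0$ is an \emph{isolated} point of $\sigma(\mu)$.

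For (1) I would argue by contraposition: suppose $\mu\in\mathscr{C}$ and $h\in\triangle(M_0(G))$ with $h(\mu)=w\neq 0$, and show $h\in\Gamma$. Extending $h$ to $\widetilde{h}\in\triangle(M(G))$ gives $w=\widetilde{h}(\mu)\in\sigma(\mu)=\widehat{\mu}(\Gamma)\cup\{0\}$, so $w=\widehat{\mu}(\gamma_0)$ for some $\gamma_0$ and $E:=\{\gamma:\widehat{\mu}(\gamma)=w\}$ is finite and nonempty. Since $w$ is isolated in $\sigma(\mu)$, the Shilov idempotent theorem --- i.e.\ the Riesz projection obtained by applying the holomorphic functional calculus in $M(G)$ to the locally constant function $f$ which is $1$ near $w$ and $0$ near $\sigma(\mu)\setminus\{w\}$, with $f(0)=0$ so that the result stays in the ideal --- yields an idempotent $e=f(\mu)\in M_0(G)$ with $\widehat{e}=f\circ\widehat{\mu}=\mathbf{1}_{E}$ and $h(e)=\widetilde{h}(e)=f(w)=1$. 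By semisimplicity of $M(G)$, $e$ coincides with $\sum_{\gamma\in E}p_{\gamma}$, where $p_{\gamma}$ is the measure with $\widehat{p_{\gamma}}=\mathbf{1}_{\{\gamma\}}$ (namely $\gamma$ times the normalized Haar measure of $G$), so $\|p_{\gamma}\|=1$ and $p_{\gamma}\ast p_{\gamma'}=\delta_{\gamma\gamma'}\,p_{\gamma}$; as the $p_{\gamma}$ are orthogonal idempotents and $\sum_{\gamma\in E}h(p_{\gamma})=h(e)=1$, exactly one of them, say $p_{\gamma_1}$, satisfies $h(p_{\gamma_1})=1$. Finally, for arbitrary $\lambda\in M_0(G)$ semisimplicity gives $\lambda\ast p_{\gamma_1}=\widehat{\lambda}(\gamma_1)\,p_{\gamma_1}$, whence $h(\lambda)=h(p_{\gamma_1})\,h(\lambda)=h(\lambda\ast p_{\gamma_1})=\widehat{\lambda}(\gamma_1)\,h(p_{\gamma_1})=\widehat{\lambda}(\gamma_1)$; thus $h$ is evaluation at $\gamma_1$ and $h\in\Gamma$.

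Granting (1), the rest is routine. Using $\sigma(\nu)=\{g(\nu):g\in\triangle(M_0(G))\}\cup\{0\}$ for $\nu\in M_0(G)$, I would verify that $\mathscr{C}$ is closed under addition, under convolution by elements of $M_0(G)$, and under norm limits: on any $g\in\triangle(M_0(G))\setminus\Gamma$, assertion (1) forces $g$ to annihilate the element in question (the two summands separately; the convolution factor coming from $\mathscr{C}$; the limit, by continuity of $g$), while on $g=\gamma\in\Gamma$ the value is the Fourier-Stieltjes transform of that element at $\gamma$; hence in every case the spectrum is contained in $\widehat{(\cdot)}(\Gamma)\cup\{0\}=\overline{\widehat{(\cdot)}(\Gamma)}$, and since the reverse inclusion always holds the element lies in $\mathscr{C}$. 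For (3), with $\mathscr{C}$ now a closed ideal, the extension correspondence identifies $\triangle(\mathscr{C})$ with $\{g\in\triangle(M_0(G)):g|_{\mathscr{C}}\neq 0\}$, which by (1) is contained in $\Gamma$; conversely each $\gamma\in\Gamma$ is nonzero on $\mathscr{C}$, since $p_{\gamma}\in\mathscr{C}$ --- being a nontrivial idempotent it has $\sigma(p_{\gamma})=\{0,1\}=\overline{\widehat{p_{\gamma}}(\Gamma)}$ --- and $\widehat{p_{\gamma}}(\gamma)=1$; the same measures $p_{\gamma}$ (for which $\{\phi:|\phi(p_{\gamma})|>1/2\}=\{\gamma\}$) show the weak$^{\ast}$ topology on $\triangle(\mathscr{C})$ is the discrete one, so $\triangle(\mathscr{C})=\Gamma$ as topological spaces.

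The hard part is assertion (1); the one genuine idea there is that natural spectrum inside $M_0(G)$ makes $\sigma(\mu)$ scattered away from the origin, which is precisely what puts a Riesz/Shilov idempotent at our disposal, after which the argument is a chase through semisimplicity and the explicit form of the minimal idempotents $p_{\gamma}$. The points requiring care are bookkeeping: all functional-calculus steps must be performed in the unital algebra $M(G)$ (which computes the same spectrum as any unitization of $M_0(G)$, as recalled in the introduction), and $f(0)=0$ must be used to keep the idempotent $e$ inside $M_0(G)$.
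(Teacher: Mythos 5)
The paper does not prove this statement at all: it is quoted as Zafran's theorem (Theorem 3.2 of \cite{z}) and used as a black box, so there is no proof by the authors to compare yours with. Judged on its own, your argument is correct. The key observations --- that for $\mu\in M_{0}(G)$ over an infinite compact $G$ the only possible accumulation point of $\widehat{\mu}(\Gamma)$ is $0$, so membership in $\mathscr{C}$ just means natural spectrum and every nonzero spectral value is isolated; that the Riesz idempotent $e=f(\mu)$ stays in $M_{0}(G)$ because $f(0)=0$ (write $f(z)=zg(z)$ and use that $M_{0}(G)$ is an ideal in $M(G)$); that $\widehat{e}=\mathbf{1}_{E}$ with $E$ finite since $\widehat{\mu}$ vanishes at infinity, whence $e=\sum_{\gamma\in E}p_{\gamma}$; and that $h(p_{\gamma_{1}})=1$ forces $h(\lambda)=\widehat{\lambda}(\gamma_{1})$ for all $\lambda$, i.e.\ $h\in\Gamma$ --- fit together without gaps, and deriving (2) and (3) from (1) together with the extension lemma for multiplicative functionals on closed ideals is routine, exactly as you indicate. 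This is essentially Zafran's own strategy (isolated points of a natural spectrum produce idempotents with finitely supported transforms), though you bypass any appeal to Cohen's idempotent theorem because the finiteness of $E$ comes directly from $\widehat{\mu}\in c_{0}(\Gamma)$. Two cosmetic points: the identification $e=\sum_{\gamma\in E}p_{\gamma}$ rests on the injectivity of the Fourier--Stieltjes transform ($\Gamma$ being a set of uniqueness, as recalled in the introduction), which is the relevant fact rather than semisimplicity of $M(G)$ per se; and in (2) you should also record closure under scalar multiples, which is of course trivial.
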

The last ingredient is an observation of Sierpiński.
\begin{fakt}
There exists uncountably many infinite subsets of natural numbers such that the intersection of each two is finite.
\end{fakt}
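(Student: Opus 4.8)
The plan is to exhibit such a family explicitly. Since any two countably infinite sets are in bijection, and a bijection carries infinite sets to infinite sets and preserves the finiteness of intersections, it is enough to produce an uncountable family of infinite subsets of \emph{some} fixed countably infinite set $S$, pairwise with finite intersections, and then transport it to $\mathbb{N}$ along a bijection $S\to\mathbb{N}$. I would take $S$ to be the set $2^{<\omega}$ of all finite sequences of zeros and ones, which is visibly countable.

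For every infinite binary sequence $f\in 2^{\omega}$ let $B_{f}\subset 2^{<\omega}$ be the set of all finite initial segments of $f$, i.e.\ $B_{f}=\{\,f|_{n}:n\in\mathbb{N}\,\}$ where $f|_{n}=(f(0),\dots,f(n-1))$. Each $B_{f}$ is infinite, and the assignment $f\mapsto B_{f}$ is injective (one recovers $f$ from $B_{f}$), so the family $\{B_{f}:f\in 2^{\omega}\}$ has the cardinality of the continuum, hence is uncountable.

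The one point that needs checking is that the intersection of any two distinct members is finite. If $f\neq g$, let $m$ be the least index with $f(m)\neq g(m)$; any sequence lying in $B_{f}\cap B_{g}$ is an initial segment of both $f$ and $g$ and therefore has length at most $m$, so $\#(B_{f}\cap B_{g})\leq m<\infty$. Transporting $\{B_{f}:f\in 2^{\omega}\}$ through a bijection $2^{<\omega}\to\mathbb{N}$ then yields the required family.

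I expect no real obstacle: the construction is elementary and the only care needed is the trivial remark that passing to $\mathbb{N}$ via a bijection preserves both ``infinite'' and ``pairwise finite intersection''. An equally short alternative avoids the tree entirely: enumerate $\mathbb{Q}$, choose for each real $x$ a sequence of rationals converging to $x$, and let $A_{x}$ be its range; then each $A_{x}$ is infinite, there are uncountably many such $x$, and $A_{x}\cap A_{y}$ must be finite for $x\neq y$, since an infinite subset of it would have to converge simultaneously to $x$ and to $y$.
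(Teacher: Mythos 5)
Your proof is correct. Note that the paper itself offers no argument for this statement at all: it is quoted as a classical observation of Sierpi\'nski and used as a black box, so there is no ``paper proof'' to compare against. Both of your constructions are standard ways to produce an almost disjoint family of size continuum --- the branches-of-the-binary-tree argument (initial segments $B_{f}$ of $f\in 2^{\omega}$, transported to $\mathbb{N}$ by a bijection with $2^{<\omega}$) and the rational-approximation argument --- and your verification of pairwise finiteness is sound; the cardinality bound $\#(B_{f}\cap B_{g})\leq m$ should really be $m+1$ if you count the empty segment, but this is immaterial. The only point worth tightening in the alternative argument is to choose, for each real $x$, a sequence of \emph{distinct} rationals converging to $x$ (a constant sequence at a rational $x$ would make $A_{x}$ finite); with that choice the argument that an infinite subset of $A_{x}\cap A_{y}$ would accumulate at both $x$ and $y$ goes through verbatim.
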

We are ready now to state and proof our main theorem
\begin{tw}\label{glo}
Let $G$ be a compact abelian group. Then $\triangle(M_{0}(G))$ contains continuum pairwise disjoint open subsets. In particular, $\triangle(M_{0}(G))$ is not separable.
\end{tw}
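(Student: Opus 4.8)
The plan is to attach to each member of a large almost disjoint family of subsets of a fixed infinite dissociate set a Riesz product lying in $M_0(G)$ whose Gelfand transform already attains the value $-1$ on $\triangle(M_0(G))$, and then to use Zafran's theorem to show that the corresponding ``$\widehat{\mu}$ is near $-1$'' open sets are pairwise disjoint. We may assume $G$ is infinite (the results invoked below, e.g. Theorem \ref{rip}, presuppose this). Then $\Gamma$ is an infinite discrete abelian group, so it contains an infinite dissociate set $\Theta$; identify $\Theta$ with $\mathbb{N}$ and, by the observation of Sierpi\'nski above in its standard strengthening, fix a family $\{\Theta_i\}_{i\in I}$ of infinite subsets of $\Theta$ with $I$ of cardinality continuum and $\#(\Theta_i\cap\Theta_j)<\infty$ whenever $i\neq j$.

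For each $i$, enumerate $\Theta_i=\{\vartheta_{i,1},\vartheta_{i,2},\ldots\}$ and put $a_i(\vartheta_{i,k}):=1/\log(k+8)$, a real function with values in $(0,\tfrac12)$ that tends to $0$ and satisfies $\sum_{\theta\in\Theta_i}|a_i(\theta)|^{2n}=\infty$ for every $n$. Let $\mu_i:=R(a_i,\Theta_i)$. Then $\mu_i$ is a Hermitian probability measure; by Theorem \ref{rip} it has independent powers, so Theorem \ref{ips} gives $\sigma(\mu_i)=\{z:|z|\le 1\}$. Since $a_i\to 0$ along the enumeration, its Fourier--Stieltjes transform vanishes at infinity (for each $\varepsilon>0$ a word $\omega$ with $|\widehat{\mu_i}(\omega)|\ge\varepsilon$ must have all its letters in the finite set $\{\theta:|a_i(\theta)|\ge\varepsilon\}$ and length at most $\log_2(1/\varepsilon)$, so there are finitely many such $\omega$), hence $\mu_i\in M_0(G)$. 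Finally, every nonzero Fourier--Stieltjes coefficient of $\mu_i$ is a product of numbers in $(0,\tfrac12)$, or equals $1$, so $\widehat{\mu_i}(\gamma)\ge 0$ for all $\gamma\in\Gamma$.

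The crucial step is the separation. For $i\neq j$ the set $\Theta_i\cap\Theta_j$ is finite, so by Lemma \ref{ban} the set $\Omega(\Theta_i)\cap\Omega(\Theta_j)$ is finite; since $\widehat{\mu_i\ast\mu_j}=\widehat{\mu_i}\,\widehat{\mu_j}$ is supported in it, $\mu_i\ast\mu_j$ is a trigonometric polynomial, hence has natural spectrum of the required form, i.e. $\mu_i\ast\mu_j\in\mathscr{C}$. By Theorem \ref{zz}(1), every $h\in\triangle(M_0(G))\setminus\Gamma$ satisfies $h(\mu_i)h(\mu_j)=h(\mu_i\ast\mu_j)=0$; thus each such $h$ has $h(\mu_i)\neq 0$ for at most one index $i$. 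Now set $U_i:=\{h\in\triangle(M_0(G)):|h(\mu_i)+1|<\tfrac12\}$, which is open since it is the preimage of an open disc under the (continuous) Gelfand transform of $\mu_i$. It is nonempty: $-1\in\sigma(\mu_i)$ and $-1\neq 0$, so $h(\mu_i)=-1$ for some $h\in\triangle(M_0(G))$. It is disjoint from $\Gamma$, because $\widehat{\mu_i}(\gamma)\ge 0$ forces $|\widehat{\mu_i}(\gamma)+1|\ge 1>\tfrac12$. Hence if $h\in U_i\cap U_j$ with $i\neq j$, then $h\notin\Gamma$ while $|h(\mu_i)|,|h(\mu_j)|>\tfrac12>0$, contradicting the preceding paragraph; so $U_i\cap U_j=\emptyset$.

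Consequently $\{U_i\}_{i\in I}$ is a family of continuum many pairwise disjoint nonempty open subsets of $\triangle(M_0(G))$, and a space carrying such a family cannot be separable (a countable dense set would have to meet each $U_i$). The only genuinely delicate points are arranging simultaneously that $\mu_i\in M_0(G)$ and that $\mu_i$ has independent powers (which is why $a_i$ is taken to decay slowly, like $1/\log k$, rather than being bounded below), and the key observation that $\mu_i\ast\mu_j$ falls into Zafran's class $\mathscr{C}$; the rest is routine Gelfand theory and point-set topology.
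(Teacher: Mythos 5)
Your proposal is correct and follows essentially the same route as the paper: Riesz products over a Sierpi\'nski almost disjoint family of subsets of a dissociate set, slowly decaying real coefficients to get independent powers (hence full disc spectrum) while staying in $M_{0}(G)$, Lemma \ref{ban} to make $\mu_i\ast\mu_j$ a trigonometric polynomial, and Theorem \ref{zz}(1) to separate the preimage open sets. The only deviation is cosmetic: you center the open sets at $-1$ and exclude $\Gamma$ via positivity of $\widehat{\mu_i}$, whereas the paper centers them at a non-real $z_0$ and uses real-valuedness; you also spell out the nonemptiness of the $U_i$ and the vanishing at infinity, which the paper leaves implicit.
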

\begin{proof}
Let $\Theta$ be an infinite countable dissociate subset of $\Gamma$ enumerated by natural numbers. Then, by the observation of Sierpiński it contains uncountably many infinite subsets $\{\Theta_{\alpha}\}_{\alpha\in\mathbb{R}}$ such that the intersection of each two is finite. For $\alpha\in\mathbb{R}$ we form an auxiliary function $b_{\alpha}:\Theta_{\alpha}\rightarrow\mathbb{N}$ by the following recipe: $b_{\alpha}(\theta)$ is equal to the index of $\theta$ in $\Theta_{\alpha}$ (for example, if $\Theta_{\alpha}=\{\theta_{3^{k}}:k\in\mathbb{N}\}$ then $b_{\alpha}(\theta_{3^{k}})=k$). Also, for every $\alpha\in\mathbb{R}$, let us define a function $a_{\alpha}:\Theta_{\alpha}\rightarrow\mathbb{R}$ by the following formula
\begin{equation*}
a_{\alpha}(\theta)=\frac{1}{\mathrm{ln}(b_{\alpha}(\theta)+5)}.
\end{equation*}
Let us form a family measures $\mu_{\alpha}:=R(a_{\alpha},\Theta_{\alpha})$. Then, for every $\alpha\in\mathbb{R}$ we have $\mu_{\alpha}\in M_{0}(G)$ and also
\begin{equation*}
\mathrm{supp}\widehat{\mu_{\alpha}}=\{\theta\in\Gamma:\widehat{\mu_{\alpha}}\neq 0\}=\Omega(\Theta_{\alpha}).
\end{equation*}
Since the intersection of every two different sets from the collection $\{\Theta_{\alpha}\}_{\alpha\in\mathbb{R}}$ is finite, we have by Lemma $\ref{ban}$, $\#\Omega(\Theta_{\alpha_{1}})\cap\Omega (\Theta_{\alpha_{2}})<\infty$ for $\alpha_{1},\alpha_{2}\in\mathbb{R}$, $\alpha_{1}\neq\alpha_{2}$. On the other hand,
\begin{equation*}
\mathrm{supp}\widehat{\mu_{\alpha_{1}}\ast\mu_{\alpha_{2}}}=\mathrm{supp}\widehat{\mu_{\alpha_{1}}}\cap \mathrm{supp}\widehat{\mu_{\alpha_{2}}}=\Omega(\Theta_{\alpha_{1}})\cap\Omega(\Theta_{\alpha_{2}})
\end{equation*}
which shows that $\mu_{\alpha_{1}}\ast\mu_{\alpha_{2}}$ is a trigonometric polynomial. It is clear from our choice of a function $a_{\alpha}$ that for every $\alpha\in\mathbb{R}$ the assumptions of Theorems $\ref{rip}$,$\ref{ips}$ are satisfied and so $\sigma(\mu_{\alpha})=\{z\in\mathbb{C}:|z|\leq 1\}$. Let us take now $z_{0}\in\mathbb{C}$, $|z_{0}|<1$ and $z_{0}\notin\mathbb{R}$. Then, there exists an open neighborhood $V$ of $z_{0}$ in $\{z\in\mathbb{C}:|z|\leq 1\}$ which does not intersect the real line. Put $U_{\alpha}:=\widehat{\mu_{\alpha}}^{-1}(V)\subset \triangle(M_{0}(G))$. By the continuity of the Gelfand transforms of elements the set $U_{\alpha}$ is open in $\triangle(M_{0}(G))$. We are going to show that $U_{\alpha_{1}}\cap U_{\alpha_{2}}=\emptyset$ for $\alpha_{1}\neq\alpha_{2}$ which will finish the proof. First of all, $U_{\alpha}\cap\Gamma=\emptyset$ for every $\alpha\in\mathbb{R}$ because the Fourier-Stieltjes transform of $\mu_{\alpha}$ is real-valued for every $\alpha\in\mathbb{R}$ and $V$ does not intersect the real line. Suppose on the contrary that there exists $\varphi\in U_{\alpha_{1}}\cap U_{\alpha_{2}}$ for some $\alpha_{1},\alpha_{2}\in\mathbb{R}$, $\alpha_{1}\neq\alpha_{2}$. Then $\widehat{\mu_{\alpha_{1}}}(\varphi)\neq 0$ and $\widehat{\mu_{\alpha_{2}}}(\varphi)\neq 0$ which gives
\begin{equation}\label{nzero}
\varphi(\mu_{\alpha_{1}}\ast\mu_{\alpha_{2}})\neq 0.
\end{equation}
But $\varphi$ belongs to $\triangle(M_{0}(G))\setminus\Gamma$ and $\mu_{\alpha_{1}}\ast\mu_{\alpha_{2}}$ is a trigonometric polynomial and all trigonometric polynomials are in Zafran's ideal $\mathscr{C}$. Hence ($\ref{nzero}$) contradicts item 1. of Theorem $\ref{zz}$.
\end{proof}
\begin{rem}\label{upo}
In fact, an examination of the proof given above shows that we did not use the whole strength of Theorem \ref{zz}. It would be enough to refer to the following fact: if $\mu\in M(G)$ is an absolutely continous measure then the Gelfand tranform of $\mu$ vanishes off $\Gamma$, which is true for any locally compact abelian group (not necessary compact) because $L^{1}(G)$ forms the ideal in $M(G)$ (put $I=L^{1}(G)$ in the discussion following this remark).
\end{rem}
Let $A$ be a commutative, unital Banach algebra with closed ideal $I$. Then (see for example Lemma 2.2.5 in \cite{kan})
\begin{equation*}
\triangle(A)=\triangle(I)\cup h(I),\text{ }\triangle(I)\cap h(I)=\emptyset
\end{equation*}
where $h(I):=\{\varphi\in\triangle(A):\varphi|_{I}=0\}$ and $\triangle(I)$ is open in $\triangle(A)$. In our case, since $M_{0}(G)$ is a closed ideal in $M(G)$ we have the following corollary from Theorem $\ref{glo}$.
\begin{cor}\label{zww}
Let $G$ be a compact abelian group. Then $\triangle(M(G))$ contains continuum pairwise disjoint open subsets. In particular, $\triangle(M(G))$ is not separable.
\end{cor}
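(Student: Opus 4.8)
The plan is to transfer the conclusion of Theorem~\ref{glo} from $\triangle(M_{0}(G))$ to $\triangle(M(G))$ by exploiting the fact that $M_{0}(G)$ is a closed ideal in $M(G)$, together with the general structure result recalled immediately above the corollary.

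First I would apply that structure result with $A=M(G)$ and $I=M_{0}(G)$: since $M_{0}(G)$ is a closed ideal in $M(G)$, we obtain the decomposition $\triangle(M(G))=\triangle(M_{0}(G))\cup h(M_{0}(G))$ with the two pieces disjoint and $\triangle(M_{0}(G))$ \emph{open} in $\triangle(M(G))$. Concretely, every multiplicative linear functional on the ideal $M_{0}(G)$ extends uniquely to a multiplicative linear functional on $M(G)$ (Lemma 2.2.5 in \cite{kan}), and this identification realizes $\triangle(M_{0}(G))$ as an open subspace of $\triangle(M(G))$.

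Next I would take the family $\{U_{\alpha}\}_{\alpha\in\mathbb{R}}$ of pairwise disjoint nonempty open subsets of $\triangle(M_{0}(G))$ furnished by Theorem~\ref{glo}. Because a subset that is open in an open subspace is open in the ambient space, each $U_{\alpha}$ is open in $\triangle(M(G))$; the $U_{\alpha}$ remain nonempty and pairwise disjoint. This already yields continuum many pairwise disjoint open subsets of $\triangle(M(G))$. For the non-separability clause I would then argue that any dense subset $D$ of $\triangle(M(G))$ must intersect every $U_{\alpha}$, hence $D$ contains at least one point from each member of an uncountable disjoint family, so $D$ cannot be countable.

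I do not expect a real obstacle here; the only point that needs a careful word is the honest identification of $\triangle(M_{0}(G))$ with an open subset of $\triangle(M(G))$ through the extension-of-functionals lemma, but this is precisely the cited Lemma 2.2.5 of \cite{kan}, already used earlier in the paper, so it requires no new work.
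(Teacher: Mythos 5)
Your proposal is correct and follows exactly the route the paper intends: the corollary is deduced from Theorem~\ref{glo} via the decomposition $\triangle(M(G))=\triangle(M_{0}(G))\cup h(M_{0}(G))$ with $\triangle(M_{0}(G))$ open in $\triangle(M(G))$, stated just before the corollary. Transferring the pairwise disjoint open sets from the open subspace and noting that a dense set must meet each of them is precisely the argument the paper has in mind.
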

In the set-theoretical terminology we may say that the Gelfand space of a measure algebra on a compact group (also for non-compact as will be seen in Theorem \ref{ogw}) does not have ccc (countable chain condition).
\section{The general case}
The aim of this section is to prove the generalisation of Theorem \ref{glo}.
\begin{tw}\label{og}
Let $G$ be a non-discrete locally compact abelian group. Then $\triangle(M_{0}(G))$ contains continuum pairwise disjoint open sets. In particular, this space is not separable.
\end{tw}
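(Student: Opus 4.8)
The plan is to reduce to the compact case, Theorem~\ref{glo}, by means of the structure theory of locally compact abelian groups; in each case I will produce a family $\{\mu_{\alpha}\}_{\alpha\in\mathbb{R}}\subset M_{0}(G)$ of Hermitian probability measures, each with independent powers (so that $\sigma(\mu_{\alpha})=\{z:|z|\le1\}$ by Theorem~\ref{ips}), and with $\mu_{\alpha_{1}}\ast\mu_{\alpha_{2}}\in L^{1}(G)$ whenever $\alpha_{1}\ne\alpha_{2}$. Granting such a family the argument closes exactly as in Theorem~\ref{glo}: pick $z_{0}$ with $|z_{0}|<1$, $z_{0}\notin\mathbb{R}$, and a neighbourhood $V\ni z_{0}$ in $\{z:|z|\le1\}$ disjoint from $\mathbb{R}$, and set $U_{\alpha}:=\widehat{\mu_{\alpha}}^{-1}(V)\subset\triangle(M_{0}(G))$. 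Each $U_{\alpha}$ is open; it is nonempty because $V\subset\sigma(\mu_{\alpha})=\widehat{\mu_{\alpha}}(\triangle(M(G)))$ and a functional with $\widehat{\mu_{\alpha}}(\varphi)\ne0$ cannot annihilate $\mu_{\alpha}\in M_{0}(G)$, hence lies in $\triangle(M_{0}(G))$; and the $U_{\alpha}$ are pairwise disjoint since a common point $\varphi$ would satisfy $\widehat{\mu_{\alpha_{1}}}(\varphi),\widehat{\mu_{\alpha_{2}}}(\varphi)\in V$, so $\varphi\notin\Gamma$ (the transforms being real on $\Gamma$), whence $\varphi$ would annihilate $L^{1}(G)\ni\mu_{\alpha_{1}}\ast\mu_{\alpha_{2}}$ by Remark~\ref{upo}, contradicting $\widehat{\mu_{\alpha_{1}}}(\varphi)\widehat{\mu_{\alpha_{2}}}(\varphi)\ne0$.

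By the structure theorem for locally compact abelian groups (see e.g.\ \cite{r}), $G\cong\mathbb{R}^{n}\times G_{1}$ where $G_{1}$ contains a compact open subgroup $K_{0}$; since $G$ is non-discrete, either $G_{1}$ is non-discrete or $n\ge1$. \textbf{Case 1: $G_{1}$ is non-discrete}, i.e.\ $K_{0}$ is infinite. Run the construction from the proof of Theorem~\ref{glo} on the infinite compact group $K_{0}$ to obtain Hermitian probability measures $\widetilde{\mu}_{\alpha}\in M_{0}(K_{0})$ with independent powers and with $\widetilde{\mu}_{\alpha_{1}}\ast\widetilde{\mu}_{\alpha_{2}}$ a trigonometric polynomial on $K_{0}$, and put $\mu_{\alpha}:=g\times\widetilde{\mu}_{\alpha}$, where $g$ is a fixed Gaussian probability measure on $\mathbb{R}^{n}$ (with $\mu_{\alpha}=\widetilde{\mu}_{\alpha}$ if $n=0$). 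Since $H:=\mathbb{R}^{n}\times K_{0}$ is an open subgroup of $G$, its annihilator $H^{\perp}$ is compact in $\Gamma$, so that transforms vanishing at infinity on $\widehat{H}=\Gamma/H^{\perp}$ pull back to transforms vanishing at infinity on $\Gamma$; as $\widehat{\mu_{\alpha}}=\widehat{g}\otimes\widehat{\widetilde{\mu}_{\alpha}}\in C_{0}(\widehat{H})$ we get $\mu_{\alpha}\in M_{0}(H)\subset M_{0}(G)$. Moreover $\mu_{\alpha}$ is a Hermitian probability measure; its convolution powers $g^{\ast k}\times\widetilde{\mu}_{\alpha}^{\ast k}$ are mutually singular because the $\widetilde{\mu}_{\alpha}^{\ast k}$ are, so $\mu_{\alpha}$ has independent powers; and $\mu_{\alpha_{1}}\ast\mu_{\alpha_{2}}=g^{\ast2}\times(\widetilde{\mu}_{\alpha_{1}}\ast\widetilde{\mu}_{\alpha_{2}})\in L^{1}(\mathbb{R}^{n})\times L^{1}(K_{0})\subset L^{1}(G)$.

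\textbf{Case 2: $G_{1}$ is discrete} (so necessarily $n\ge1$). Then $\{0\}$ is open in $G_{1}$, hence $\mathbb{R}^{n}\cong\mathbb{R}^{n}\times\{0\}$ is an open subgroup of $G$ and, as in Case~1, $M_{0}(\mathbb{R}^{n})\subset M_{0}(G)$; writing $\mathbb{R}^{n}=\mathbb{R}\times\mathbb{R}^{n-1}$ and forming $\nu_{\alpha}\times g'$ with $g'$ a Gaussian on $\mathbb{R}^{n-1}$, exactly as above, it is enough to construct a family $\{\nu_{\alpha}\}_{\alpha\in\mathbb{R}}\subset M_{0}(\mathbb{R})$ of Hermitian measures with independent powers such that $\nu_{\alpha_{1}}\ast\nu_{\alpha_{2}}\in L^{1}(\mathbb{R})$ for $\alpha_{1}\ne\alpha_{2}$. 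This is the crux, and the main obstacle: Riesz products do not exist on $\mathbb{R}$ in the naive sense (the partial products have infinite mass) and, being singular, cannot have transforms vanishing at infinity on all of $\mathbb{R}$. I would use \emph{localised Riesz products}: fix an even Schwartz probability density $h\ge0$ with $\widehat{h}\in C_{c}^{\infty}(\mathbb{R})$ supported in a tiny interval; fix a rapidly lacunary dissociate set $\Theta=\{\theta_{k}\}\subset\mathbb{R}$, so that the word set $\Omega(\Theta)$ is uniformly discrete; by the observation of Sierpi\'nski choose infinite $\Theta_{\alpha}\subset\Theta$ with pairwise finite intersections; and let $\nu_{\alpha}$ be the weak$^{*}$ limit, over finite $\Phi\subset\Theta_{\alpha}$, of the measures $h(x)\prod_{\theta\in\Phi}\bigl(1+a_{\alpha}(\theta)\cos(2\pi\theta x)\bigr)\,dx$, with real coefficients $a_{\alpha}$ tending to $0$ slowly enough that $\sum_{\theta\in\Theta_{\alpha}}a_{\alpha}(\theta)^{2}=\infty$, in the spirit of Theorem~\ref{rip}. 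One then has to check: (i) the limit exists and is a Hermitian probability measure lying in $M_{0}(\mathbb{R})$, its Fourier--Stieltjes transform being $\widehat{h}\ast\sigma_{\alpha}$, where $\sigma_{\alpha}$ is the discrete measure carried by $\Omega(\Theta_{\alpha})$ with the Riesz coefficients, so that the decay of those coefficients together with uniform discreteness of $\Omega(\Theta_{\alpha})$ forces $\widehat{\nu_{\alpha}}\in C_{0}(\mathbb{R})$; (ii) $\nu_{\alpha}$ has independent powers, which should follow from the dissociateness of $\Theta_{\alpha}$ and $\sum a_{\alpha}(\theta)^{2}=\infty$ via the mechanism behind Theorem~\ref{rip}, the smooth factor $h$ affecting only a bounded part of the spectrum; and (iii) when $\#(\Theta_{\alpha_{1}}\cap\Theta_{\alpha_{2}})<\infty$, Lemma~\ref{ban} confines the Fourier support of $\widehat{\nu_{\alpha_{1}}}\,\widehat{\nu_{\alpha_{2}}}$ to a bounded neighbourhood of $\Omega(\Theta_{\alpha_{1}}\cap\Theta_{\alpha_{2}})$, and convolving the atomic part against $\widehat{h}\in C_{c}^{\infty}$ makes this product a compactly supported smooth function, whence $\nu_{\alpha_{1}}\ast\nu_{\alpha_{2}}\in L^{1}(\mathbb{R})$. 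Step (iii), the replacement for ``$\mu_{\alpha_{1}}\ast\mu_{\alpha_{2}}$ is a trigonometric polynomial'' from the compact case, is the most delicate point on the non-compact group; alternatively one may invoke known constructions of symmetric measures on $\mathbb{R}$ with independent powers and suitably separated Fourier supports from the literature around the Wiener--Pitt phenomenon (cf.\ \cite{grmc}).
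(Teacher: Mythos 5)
Your overall architecture is the paper's: reduce to producing continuum many Hermitian probability measures in $M_{0}(G)$ with independent powers whose pairwise convolutions lie in $L^{1}(G)$, then rerun the argument of Theorem~\ref{glo} with Remark~\ref{upo} supplying the contradiction (this is exactly Lemma~\ref{kon}), and your Case~1 (infinite compact open subgroup $K_{0}$, product with a Gaussian on $\mathbb{R}^{n}$) is a correct variant of parts~3 and~4 of Lemma~\ref{tech}. The genuine gap is in Case~2, which is the only case not already covered by Case~1 and which you yourself identify as the crux: you never prove that your ``localised Riesz products'' on $\mathbb{R}$ have independent powers. Theorem~\ref{rip} is a statement about honest Riesz products on an infinite \emph{compact} abelian group; it says nothing about the weak$^{*}$ limits of $h(x)\prod_{\theta\in\Phi}\bigl(1+a_{\alpha}(\theta)\cos(2\pi\theta x)\bigr)\,dx$, and ``the smooth factor $h$ affects only a bounded part of the spectrum'' is not an argument: mutual singularity of all convolution powers is a measure-theoretic property of the limit measure, not of its Fourier support, and establishing it on a non-compact group is precisely the nontrivial (Brown--Moran type) work. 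You also leave (i) and (iii) as sketches (these are more routine, granted a sufficiently lacunary $\Theta$ and small $\mathrm{supp}\,\widehat{h}$), and the closing fallback ``invoke known constructions from the literature'' concedes that the key step is missing. As written, the proof is therefore incomplete exactly where it departs from the compact case.

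The paper closes this gap by transference rather than by a new construction: it takes the measures $\nu_{\alpha}$ on $\mathbb{T}$ already built in Theorem~\ref{glo} and lets $\mu_{\alpha}\in M(\mathbb{R})$ be the measure whose Fourier--Stieltjes transform agrees with $\widehat{\nu_{\alpha}}$ on $\mathbb{Z}$ and is linear in the gaps; Appendix A.7 of \cite{grmc} guarantees that these are Hermitian probability measures in $M_{0}(\mathbb{R})$ with independent powers, so no Riesz-product theory on $\mathbb{R}$ has to be redone. Absolute continuity of $\mu_{\alpha}\ast\mu_{\beta}$ is then obtained by pushing forward along the quotient map $\mathbb{R}\rightarrow\mathbb{T}$: the image of $\mu_{\alpha}\ast\mu_{\beta}$ has transform $\widehat{\nu_{\alpha}}\widehat{\nu_{\beta}}$, hence is absolutely continuous, while the push-forward of a positive measure with nonzero singular part keeps a nonzero singular part, forcing $\mu_{\alpha}\ast\mu_{\beta}\in L^{1}(\mathbb{R})$; higher dimensions and the remaining factors are then handled, much as you do, by multiplying with a fixed measure on the complementary factor. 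To repair your proof, either supply a genuine independent-powers argument for your localised products or replace Case~2 by this interpolation/quotient-map transference.
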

Analogously as in the compact case (see the discussion after Remark \ref{upo} and Corollary \ref{zww}) we are able to use the last theorem to obtain the information on the topological properties of $\triangle(M(G))$.
\begin{cor}\label{ogw}
Let $G$ be a non-discrete locally compact abelian group. Then $\triangle(M(G))$ contains continuum pairwise disjoint open sets. In particular, this space is not separable.
\end{cor}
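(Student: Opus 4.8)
Corollary~\ref{ogw} follows at once from Theorem~\ref{og}: since $M_{0}(G)$ is a closed ideal in $M(G)$, the discussion after Remark~\ref{upo} shows that $\triangle(M_{0}(G))$ is open in $\triangle(M(G))$, so the continuum of pairwise disjoint open subsets of $\triangle(M_{0}(G))$ given by Theorem~\ref{og} is a continuum of pairwise disjoint nonempty open subsets of $\triangle(M(G))$, and a space with uncountably many pairwise disjoint nonempty open sets is not separable. So the content is Theorem~\ref{og}, which I would prove by the following plan. Using the structure theorem, write $G\cong\mathbb{R}^{a}\times H$ with $H$ containing a compact open subgroup $K$. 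If $a=0$ then $G=H$ has the compact open subgroup $K$, which is necessarily infinite (a finite open subgroup forces $G$ discrete); call this \textbf{Case~(a)}. If $a\geq 1$, call it \textbf{Case~(b)}. Every non-discrete $G$ is of one of these types.

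In both cases one aims to produce a family $\{\mu_{\alpha}\}_{\alpha\in\mathbb{R}}\subset M_{0}(G)$ of Hermitian probability measures with independent powers such that $\mu_{\alpha_{1}}\ast\mu_{\alpha_{2}}\in L^{1}(G)$ whenever $\alpha_{1}\neq\alpha_{2}$; the proof then ends exactly as for Theorem~\ref{glo}. Indeed, by Theorem~\ref{ips} every $\sigma(\mu_{\alpha})=\{z:|z|\leq1\}$, hence $\overline{\widehat{\mu_{\alpha}}(\triangle(M_{0}(G)))}\cup\{0\}$ is the closed unit disc; fixing $z_{0}$ with $|z_{0}|<1$, $z_{0}\notin\mathbb{R}$ and an open $V\ni z_{0}$ in the disc with $V\cap\mathbb{R}=\emptyset$, the sets $U_{\alpha}:=\widehat{\mu_{\alpha}}^{-1}(V)$ are open in $\triangle(M_{0}(G))$, nonempty (since $z_{0}\neq0$ is a limit of values of $\widehat{\mu_{\alpha}}$ on $\triangle(M_{0}(G))$), and disjoint from $\Gamma$ (there $\widehat{\mu_{\alpha}}$ is real). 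A point $\varphi\in U_{\alpha_{1}}\cap U_{\alpha_{2}}$ would give $\varphi(\mu_{\alpha_{1}}\ast\mu_{\alpha_{2}})=\widehat{\mu_{\alpha_{1}}}(\varphi)\widehat{\mu_{\alpha_{2}}}(\varphi)\neq0$ with $\varphi\notin\Gamma$, contradicting Remark~\ref{upo} for the absolutely continuous measure $\mu_{\alpha_{1}}\ast\mu_{\alpha_{2}}$; so the $U_{\alpha}$ are pairwise disjoint.

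In \textbf{Case~(a)} one replays the construction of Theorem~\ref{glo} inside $\widehat{K}$: pick a countable dissociate subset of $\widehat{K}$, split it by Sierpi\'nski's observation into $\{\Theta_{\alpha}\}_{\alpha\in\mathbb{R}}$ with pairwise finite intersections, and let $\mu_{\alpha}:=R(a_{\alpha},\Theta_{\alpha})$ (real weights as in Theorem~\ref{glo}) be the associated Riesz products on $K$, regarded as measures on $G$ carried by $K$. Since $K$ is open, the restriction map $\widehat{G}\to\widehat{K}$ has compact kernel $K^{\perp}$, hence is proper, and as $\widehat{\mu_{\alpha}}$ factors through it and lies in $c_{0}(\widehat{K})$ we get $\widehat{\mu_{\alpha}}\in C_{0}(\widehat{G})$, i.e.\ $\mu_{\alpha}\in M_{0}(G)$; by Theorem~\ref{rip} each $\mu_{\alpha}$ is Hermitian, a probability measure and has independent powers. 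For $\alpha_{1}\neq\alpha_{2}$, Lemma~\ref{ban} makes $\Omega(\Theta_{\alpha_{1}})\cap\Omega(\Theta_{\alpha_{2}})$ finite, so $\widehat{\mu_{\alpha_{1}}\ast\mu_{\alpha_{2}}}$ is carried by its preimage in $\widehat{G}$, a finite union of cosets of the compact group $K^{\perp}$, hence has compact support; picking $f\in L^{1}(G)$ with $\widehat{f}\equiv1$ there gives $\mu_{\alpha_{1}}\ast\mu_{\alpha_{2}}=f\ast(\mu_{\alpha_{1}}\ast\mu_{\alpha_{2}})\in L^{1}(G)$, as $L^{1}(G)$ is an ideal in $M(G)$.

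\textbf{Case~(b)} is where the main obstacle lies. The classical Riesz-product recipe degenerates over $\mathbb{R}^{a}$, since the partial products are no longer integrable; one must instead build the $\mu_{\alpha}$ on $\mathbb{R}^{a}$ by a generalised procedure --- a weak$^{\ast}$ limit against a probability reference measure, or an infinite convolution of probability measures supported near $0$ with rapidly increasing, Sierpi\'nski-split frequencies --- using the theory of Riesz products on locally compact abelian groups and the independent-power criterion from \cite{grmc} (see also \cite{bm},\cite{bbm}), the delicate step being to verify that the Fourier--Stieltjes transform vanishes at infinity, which is not automatic off the compact case. Given such $\mu_{\alpha}\in M_{0}(\mathbb{R}^{a})$, Hermitian probability measures with independent powers and with $\widehat{\mu_{\alpha_{1}}\ast\mu_{\alpha_{2}}}$ boundedly supported, one chooses a compact open subgroup $K\leq H$ and passes to $\mu_{\alpha}\otimes m_{K}$, where $m_{K}$ is the normalised Haar measure of $K$: this lives on the open subgroup $\mathbb{R}^{a}\times K\subset G$, and tensoring with the idempotent probability $m_{K}$ keeps the measure Hermitian, probability and of independent powers (since $(\mu_{\alpha}\otimes m_{K})^{\ast n}=\mu_{\alpha}^{\ast n}\otimes m_{K}$) and makes the transform vanish in the $\widehat{H}$-directions, so $\mu_{\alpha}\otimes m_{K}\in M_{0}(\mathbb{R}^{a}\times K)\subset M_{0}(G)$ while $(\mu_{\alpha_{1}}\ast\mu_{\alpha_{2}})\otimes m_{K}\in L^{1}(\mathbb{R}^{a}\times K)\subset L^{1}(G)$; the family $\{\mu_{\alpha}\otimes m_{K}\}$ then feeds the argument of the second paragraph, completing Theorem~\ref{og} and hence Corollary~\ref{ogw}.
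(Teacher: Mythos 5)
Your reduction of the corollary to Theorem \ref{og} (via $M_{0}(G)$ being a closed ideal, so that $\triangle(M_{0}(G))$ is open in $\triangle(M(G))$) is exactly the paper's, and your overall scheme --- reduce everything to producing continuum independent power Hermitian probability measures in $M_{0}(G)$ whose pairwise convolutions are absolutely continuous, then split into cases via the structure theorem --- is the same as Lemma \ref{kon}, Lemma \ref{tech} and Corollary \ref{mrr}; your disjointness argument is the first proof of Lemma \ref{kon} (via Remark \ref{upo}), and your Case (a) is essentially parts 1 and 4 of Lemma \ref{tech} and is sound.

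The gap is Case (b), the case $G=\mathbb{R}^{a}\times H$ with $a\geq 1$, which is precisely the genuinely non-compact situation the paper has to work for. You never construct the family $\{\mu_{\alpha}\}$ on $\mathbb{R}^{a}$: you point to ``a generalised procedure --- a weak$^{\ast}$ limit against a probability reference measure, or an infinite convolution\dots'', you yourself call the verification that $\widehat{\mu_{\alpha}}$ vanishes at infinity ``the delicate step'', and then continue ``given such $\mu_{\alpha}$''. That existence statement is the entire content of this case, and the sketched routes do not obviously deliver it: generalised Riesz products or infinite convolutions on $\mathbb{R}$ need not lie in $M_{0}(\mathbb{R})$, and besides $M_{0}$-membership, Hermitianity and independent powers you also need the Sierpi\'nski-type pairwise condition (you even ask for $\widehat{\mu_{\alpha_{1}}\ast\mu_{\alpha_{2}}}$ to be boundedly supported), none of which is checked. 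The paper's Lemma \ref{tech} fills exactly this hole by a transfer argument: take the torus measures $\nu_{\alpha}$ from Theorem \ref{glo} and let $\mu_{\alpha}\in M(\mathbb{R})$ be the measure whose Fourier--Stieltjes transform agrees with $\widehat{\nu_{\alpha}}$ on $\mathbb{Z}$ and is linear in the gaps (appendix A.7 of \cite{grmc}); this is automatically in $M_{0}(\mathbb{R})$, Hermitian, a probability measure, and has independent powers. Absolute continuity of $\mu_{\alpha_{1}}\ast\mu_{\alpha_{2}}$ is then obtained not from compact support of its transform (which the interpolation does not guarantee, since intervals around words of $\Theta_{\alpha_{1}}$ and of $\Theta_{\alpha_{2}}$ may overlap unboundedly often) but by pushing forward under $P:\mathbb{R}\rightarrow\mathbb{T}$: the image measure has the finitely supported transform $\widehat{\nu_{\alpha_{1}}}\widehat{\nu_{\alpha_{2}}}$, hence is absolutely continuous, while the image of any positive measure charging a Lebesgue null set still charges a null set; hence $\mu_{\alpha_{1}}\ast\mu_{\alpha_{2}}\in L^{1}(\mathbb{R})$. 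The passage from $\mathbb{R}$ to $\mathbb{R}^{a}\times H$ by tensoring with an absolutely continuous probability measure on the complementary factor is then as you describe. Until you supply an explicit construction of this kind, your Case (b) --- and with it Theorem \ref{og} and the corollary for groups with a Euclidean factor --- is asserted rather than proved.
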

We start with two lemmas.
\begin{lem}\label{kon}
In order to prove Theorem \ref{og} it suffices to show that $M_{0}(G)$ contains continuum indepedent power Hermitian probability measures $\{\mu_{\alpha}\}_{\alpha\in\mathbb{R}}$ such that $\mu_{\alpha}\ast\mu_{\beta}\in L^{1}(G)$ for all $\alpha\neq\beta$.
\end{lem}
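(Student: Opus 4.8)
The plan is to imitate, almost verbatim, the argument proving Theorem~\ref{glo}, with two substitutions: the Riesz-product construction of the measures is replaced by the family $\{\mu_{\alpha}\}_{\alpha\in\mathbb{R}}$ furnished by the hypothesis, and the appeal to Theorem~\ref{zz} is replaced by the weaker statement recorded in Remark~\ref{upo} (the Gelfand transform of an $L^{1}$ measure vanishes off $\Gamma$), which is exactly what allows the argument to survive the loss of compactness of $G$. So suppose we are given continuum many Hermitian probability measures $\mu_{\alpha}\in M_{0}(G)$ with independent powers such that $\mu_{\alpha}\ast\mu_{\beta}\in L^{1}(G)$ whenever $\alpha\neq\beta$; the task is to produce continuum many pairwise disjoint non-empty open subsets of $\triangle(M_{0}(G))$.

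First I would record the spectral input. Each $\mu_{\alpha}$ is a Hermitian probability measure with independent powers, so Theorem~\ref{ips} gives $\sigma(\mu_{\alpha})=\{z\in\mathbb{C}:|z|\le 1\}$. Since $M_{0}(G)$ is non-unital, $\sigma(\mu_{\alpha})$ is the spectrum in its unitisation, and one has $\sigma(\mu_{\alpha})=\widehat{\mu_{\alpha}}(\triangle(M_{0}(G)))\cup\{0\}$, where $\widehat{\mu_{\alpha}}$ now denotes the Gelfand transform on $\triangle(M_{0}(G))$; in particular every non-zero point of the closed unit disc lies in $\widehat{\mu_{\alpha}}(\triangle(M_{0}(G)))$. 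Now fix $z_{0}\in\mathbb{C}$ with $|z_{0}|<1$ and $z_{0}\notin\mathbb{R}$, and choose an open neighbourhood $V$ of $z_{0}$ inside $\{z:|z|\le 1\}$ disjoint from the real line (so in particular $0\notin V$). Put $U_{\alpha}:=\widehat{\mu_{\alpha}}^{-1}(V)\subset\triangle(M_{0}(G))$. Then $U_{\alpha}$ is open by continuity of the Gelfand transform, and it is non-empty because $V\subset\{z:|z|\le 1\}\setminus\{0\}\subset\widehat{\mu_{\alpha}}(\triangle(M_{0}(G)))$.

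It remains to verify $U_{\alpha_{1}}\cap U_{\alpha_{2}}=\emptyset$ for $\alpha_{1}\neq\alpha_{2}$, and here the reasoning is identical to the compact case. Since $\mu_{\alpha}$ is Hermitian, $\widehat{\mu_{\alpha}}$ is real-valued on $\Gamma$, so $U_{\alpha}\cap\Gamma=\emptyset$ because $V$ misses the real axis. If $\varphi\in U_{\alpha_{1}}\cap U_{\alpha_{2}}$, then $\widehat{\mu_{\alpha_{1}}}(\varphi)\neq 0$ and $\widehat{\mu_{\alpha_{2}}}(\varphi)\neq 0$, whence $\varphi(\mu_{\alpha_{1}}\ast\mu_{\alpha_{2}})=\widehat{\mu_{\alpha_{1}}}(\varphi)\,\widehat{\mu_{\alpha_{2}}}(\varphi)\neq 0$. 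But $\mu_{\alpha_{1}}\ast\mu_{\alpha_{2}}\in L^{1}(G)\subset M_{0}(G)$ while $\varphi\in\triangle(M_{0}(G))\setminus\Gamma$; extending $\varphi$ to a multiplicative functional on $M(G)$ (every functional on the ideal extends to the algebra) and invoking Remark~\ref{upo} with $I=L^{1}(G)$, we conclude $\varphi(\mu_{\alpha_{1}}\ast\mu_{\alpha_{2}})=0$, a contradiction. Hence $\{U_{\alpha}\}_{\alpha\in\mathbb{R}}$ is the desired family, and since a separable space cannot contain uncountably many pairwise disjoint non-empty open sets, $\triangle(M_{0}(G))$ is not separable, proving Theorem~\ref{og}.

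As for the difficulty: the content of this lemma is purely a matter of packaging, so there is no genuine obstacle in its proof — every step is taken from the proof of Theorem~\ref{glo} or from Remark~\ref{upo}. The only points that deserve a moment's care are (i) that functionals in $\triangle(M_{0}(G))\setminus\Gamma$ indeed annihilate $L^{1}(G)$, which one gets by passing to $M(G)$ via the extension lemma and the decomposition $\triangle(M(G))=\triangle(L^{1}(G))\cup h(L^{1}(G))$, and (ii) the non-unital spectral identity ensuring $U_{\alpha}\neq\emptyset$. The real work of Theorem~\ref{og} is deliberately displaced into the hypothesis of the lemma, namely the explicit construction of the measures $\mu_{\alpha}$ on an arbitrary non-discrete locally compact abelian group, which is what the remaining work of the section must supply.
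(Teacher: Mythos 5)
Your argument is correct and coincides with the paper's own first proof of Lemma~\ref{kon}: repeat the proof of Theorem~\ref{glo} with the hypothesised measures, replacing Theorem~\ref{zz} by the fact from Remark~\ref{upo} that the Gelfand transform of an absolutely continuous measure vanishes off $\Gamma$ (your extra care about non-emptiness of $U_{\alpha}$ and the extension of $\varphi$ to $M(G)$ is a welcome elaboration of the same reasoning). The paper additionally records a second, more direct proof, which picks $z_{0}=\frac{1+i}{\sqrt 2}$ and derives the contradiction from $\sigma(\mu_{\alpha}\ast\mu_{\beta})\subset[-1,1]$ rather than from the vanishing of $\varphi$ on $L^{1}(G)$, but your route is equally valid.
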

\begin{proof}
We will give two proofs.
\\
The first one is a simple observation that the assertion follows from an examination of the proof of Theorem \ref{glo} (instead of the use of Theorem \ref{zz} to obtain a final contradiction we base on the fact the Gelfand transform of an absolutely continuos measure vanishes off $\Gamma$ - see Remark \ref{upo}).
\\
The second proof is more direct: since each $\mu_{\alpha}$ is a Hermitian probability measure with independent powers we have $\sigma(\mu_{\alpha})=\{z\in\mathbb{C}:|z|\leq 1\}$ (see Theorem \ref{ips}). Let $z_{0}=\frac{1+i}{\sqrt{2}}$. For each $\alpha$, let $U_{\alpha}=\{\varphi\in\triangle(M_{0}(G)):|\widehat{\mu_{\alpha}}(\varphi)-z_{0}|<\frac{1}{10}\}$. We claim that $U_{\alpha}\cap U_{\beta}=\emptyset$ for $\alpha\neq\beta$. Suppose to the contrary that there exists $\varphi$ in this intersection. Then
\begin{equation*}
|\widehat{\mu_{\alpha}\ast\mu_{\beta}}(\varphi)-i|=|\widehat{\mu_{\alpha}}(\varphi)\widehat{\mu_{\beta}}(\varphi)-\widehat{\mu_{\alpha}}(\varphi)z_{0}+\widehat{\mu_{\alpha}}(\varphi)z_{0}-i|<\frac{2}{10}.
\end{equation*}
Hence, $\sigma(\mu_{\alpha}\ast\mu_{\beta})\cap \{z:|z-i|<\frac{2}{10}\}\neq\emptyset$. On the other hand, because $\mu_{\alpha}\ast\mu_{\beta}\in L^{1}(G)$ is a Hermitian probability measure, $\sigma(\mu_{\alpha}\ast\mu_{\beta})\subset(\widehat{\mu_{\alpha}}\widehat{\mu_{\beta}})(\Gamma)\cup\{0\}\subset [-1,1]$. This contradiction completes the proof.
\end{proof}
\begin{lem}\label{tech}
Let $G$ be a non-discrete locally compact abelian group. Then $M_{0}(G)$ contains continuum indepedent power Hermitian probability measures $\{\mu_{\alpha}\}_{\alpha\in\mathbb{R}}$ such that $\mu_{\alpha}\ast\mu_{\beta}\in L^{1}(G)$ for all $\alpha\neq\beta$ in each of the following cases:
\begin{enumerate}
  \item $G$ is compact.
  \item $G=\mathbb{R}$
  \item $G=\mathbb{R}^{n}\times H$, where $H$ is a locally compact group and $n\geq 1$.
  \item $G$ has an open, compact (infinite) subgroup $H$.
\end{enumerate}
\end{lem}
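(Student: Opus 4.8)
I would prove this by reducing all four items to two base constructions, the compact case and the case $G=\mathbb{R}$. For item 1 there is nothing to add: the measures $\mu_{\alpha}=R(a_{\alpha},\Theta_{\alpha})$ built inside the proof of Theorem \ref{glo} are Hermitian probability measures lying in $M_{0}(G)$ with independent powers (by Theorems \ref{rip} and \ref{ips}), and for $\alpha\neq\beta$ the convolution $\mu_{\alpha}\ast\mu_{\beta}$ has Fourier--Stieltjes transform supported on the finite set $\Omega(\Theta_{\alpha})\cap\Omega(\Theta_{\beta})$, hence is a trigonometric polynomial and in particular belongs to $L^{1}(G)$.

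For item 4, let $H$ be an open compact infinite subgroup of $G$. Since $H$ is open, $M(H)$ embeds isometrically and multiplicatively into $M(G)$ as the measures concentrated on $H$, and this embedding carries $L^{1}(H)$ into $L^{1}(G)$ (an open subgroup has positive Haar measure). Apply item 1 to the compact group $H$ to get $\{\mu_{\alpha}\}\subset M_{0}(H)$ and view them in $M(G)$. Being a Hermitian probability measure and having independent powers are internal to $H$ (the Fourier--Stieltjes transform on $\widehat{G}$ is the composition of the one on $\widehat{H}$ with the restriction map $\widehat{G}\to\widehat{H}$, and mutual singularity is tested on $H$), so they persist; moreover $\mu_{\alpha}\in M_{0}(G)$, because the restriction map $\widehat{G}\to\widehat{H}$ is onto with kernel $(G/H)^{\wedge}$, which is compact as $G/H$ is discrete, so a net tending to infinity in $\widehat{G}$ has image tending to infinity in the discrete group $\widehat{H}$, along which $\widehat{\mu_{\alpha}}$ vanishes; and $\mu_{\alpha}\ast\mu_{\beta}$ is a trigonometric polynomial on $H$, so it lies in $L^{1}(H)\subset L^{1}(G)$.

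For item 3 I would reduce to item 2. Write $G=\mathbb{R}\times K$ with $K=\mathbb{R}^{n-1}\times H$ (for $n=1$ this $K$ is an arbitrary locally compact abelian group, possibly trivial, which is item 2 itself). Granting item 2, take $\{\nu_{\alpha}\}_{\alpha\in\mathbb{R}}\subset M_{0}(\mathbb{R})$ as there, fix once and for all a symmetric probability density $g\in L^{1}(K)$ (when $K$ is discrete one may take $g=\delta_{e}$), and set $\mu_{\alpha}:=\nu_{\alpha}\otimes g$. Then $\widehat{\mu_{\alpha}}=\widehat{\nu_{\alpha}}\otimes\widehat{g}$ is real valued, so $\mu_{\alpha}$ is a Hermitian probability measure; $\mu_{\alpha}\in M_{0}(G)$ because $\widehat{\nu_{\alpha}}\in C_{0}(\mathbb{R})$, $\widehat{g}\in C_{0}(\widehat{K})$, and a tensor product of two functions vanishing at infinity vanishes at infinity; $\mu_{\alpha}$ has independent powers, since $\mu_{\alpha}^{\ast k}=\nu_{\alpha}^{\ast k}\otimes g^{\ast k}$ and a Borel set $A\subset\mathbb{R}$ carrying $\nu_{\alpha}^{\ast k}$ and missing $\nu_{\alpha}^{\ast j}$ ($k\neq j$) gives the set $A\times K$ carrying $\mu_{\alpha}^{\ast k}$ and missing $\mu_{\alpha}^{\ast j}$; and for $\alpha\neq\beta$, $\mu_{\alpha}\ast\mu_{\beta}=(\nu_{\alpha}\ast\nu_{\beta})\otimes(g\ast g)$ is the product of two absolutely continuous finite measures, hence absolutely continuous, i.e.\ an element of $L^{1}(G)$.

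The substantive case is item 2, $G=\mathbb{R}$, and it is the main obstacle. The argument of Theorem \ref{glo} cannot be copied verbatim, since a measure in $M(\mathbb{R})$ has a continuous Fourier--Stieltjes transform and so can never be carried by a discrete set of words; asking that each $\mu_{\alpha}$ be singular with spectrum the full disc while $\mu_{\alpha}\ast\mu_{\beta}$ is absolutely continuous is a genuine ``singular $\ast$ singular $=L^{1}$'' phenomenon. I would develop the Riesz--product construction on the line: fix a dissociate, very rapidly increasing sequence $\Lambda=\{\lambda_{j}\}\subset(0,\infty)$ and a fixed rapidly decaying positive $\tau\in L^{1}(\mathbb{R})$ (say a Gaussian), and for an infinite $S\subset\mathbb{N}$ let $\nu_{S}\in M(\mathbb{R})$ be the weak$^{\ast}$ limit of the partial products $\bigl(\prod_{j\in S,\,j\leq N}(1+a_{j}\cos(\lambda_{j}t))\bigr)\,d\tau(t)$, with $a_{j}=1/\log(b_{S}(j)+5)$ as in Theorem \ref{glo}. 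One then checks that $\nu_{S}$ is a Hermitian probability measure, that $\widehat{\nu_{S}}$ is concentrated (up to a rapidly decaying error coming from $\widehat{\tau}$) near $\Omega(\{\lambda_{j}:j\in S\})$ and tends to $0$ at infinity, so that $\nu_{S}\in M_{0}(\mathbb{R})$, and that the independence--of--powers criterion of Theorem \ref{rip} and its non--compact counterpart (cf.\ \cite{grmc}, \cite{bm}) still applies. Running the Sierpi\'nski almost--disjoint--family argument of Theorem \ref{glo} with $\{S_{\alpha}\}_{\alpha\in\mathbb{R}}$ almost disjoint and $\nu_{\alpha}:=\nu_{S_{\alpha}}$, Lemma \ref{ban} gives that $\Omega(\Lambda_{\alpha})$ and $\Omega(\Lambda_{\beta})$ meet in a finite set, and together with the localisation and decay of the $\widehat{\nu_{\alpha}}$ this should force $\nu_{\alpha}\ast\nu_{\beta}\in L^{1}(\mathbb{R})$. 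The delicate step is exactly this last one: on the line, finiteness of the intersection of the word sets no longer produces a trigonometric polynomial for free, so one must convert it into genuine smoothness and decay estimates for $\widehat{\nu_{\alpha}}$, near and away from $\Omega(\Lambda_{\alpha})$, strong enough to make $\widehat{\nu_{\alpha}}\widehat{\nu_{\beta}}$ integrable and hence $\nu_{\alpha}\ast\nu_{\beta}$ absolutely continuous; this is where the rapid growth of $\Lambda$ and the choice of $\tau$ must be exploited.
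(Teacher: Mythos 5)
Your treatment of items 1, 3 and 4 is sound and essentially coincides with the paper's: item 1 is just Theorem \ref{glo}, item 4 transfers the compact construction into $M(G)$ through the open compact subgroup (your remark that the annihilator of $H$ is compact, being the dual of the discrete group $G/H$, is exactly the point needed for $\mu_{\alpha}\in M_{0}(G)$), and item 3 tensors the measures from item 2 with a fixed symmetric probability density on the complementary factor, which is the same device as the paper's product with normalized Haar measure on a relatively compact open set.

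The genuine gap is in item 2, which you yourself identify as the substantive case. Your plan is to build weighted Riesz-type products $\bigl(\prod_{j\in S,\,j\leq N}(1+a_{j}\cos(\lambda_{j}t))\bigr)\,d\tau(t)$ directly on $\mathbb{R}$, but the two properties that actually carry the lemma are not established. First, Theorem \ref{rip} is a statement about Riesz products on compact groups; your $\tau$-weighted object on $\mathbb{R}$ is not such a Riesz product, and the ``non-compact counterpart'' you invoke for independent powers (and even for the limit being a probability measure -- note $\int P_{N}\,d\tau\neq 1$ in general) is not supplied. Second, and more seriously, the claim $\nu_{\alpha}\ast\nu_{\beta}\in L^{1}(\mathbb{R})$ is left as ``this should force'': on the line, finiteness of $\Omega(\Lambda_{\alpha})\cap\Omega(\Lambda_{\beta})$ does not by itself control the product of the transforms, because distinct words from the two word sets are real numbers that can be close without being equal, and the coefficients decay only like products of $1/\log$; turning this into integrability of $\widehat{\nu_{\alpha}}\widehat{\nu_{\beta}}$ would require quantitative separation of the two word sets uniformly over a continuum of almost disjoint index sets, which you do not prove. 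The paper avoids all of this by transferring the compact-case measures from $\mathbb{T}$ to $\mathbb{R}$: $\widehat{\mu_{\alpha}}$ is defined as the piecewise linear interpolation of $\widehat{\nu_{\alpha}}|_{\mathbb{Z}}$ (appendix A.7 of \cite{grmc} guarantees this is the transform of an independent power Hermitian probability measure in $M_{0}(\mathbb{R})$), and absolute continuity of $\mu_{\alpha}\ast\mu_{\beta}$ is then obtained by pushing forward under the quotient map $P:\mathbb{R}\rightarrow\mathbb{T}$: the image measure has finitely supported Fourier--Stieltjes transform, hence is a trigonometric polynomial, while the push-forward of a positive measure with nonzero singular part cannot be absolutely continuous; so $\mu_{\alpha}\ast\mu_{\beta}\in L^{1}(\mathbb{R})$ with no frequency-separation estimates at all. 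As it stands, your argument for item 2 is a program rather than a proof, so the lemma is not yet established along your route.
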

\begin{proof}
The first part was established in Theorem \ref{glo}.
\\
By the first part there exists a set $\{\nu_{\alpha}\}_{\alpha\in\mathbb{R}}$ of continuum independent power Hermitian probability measures in $M_{0}(\mathbb{T})$ such that the Fourier-Stieltjes transform of $\nu_{\alpha}\ast\nu_{\beta}$ has a finite support for each $\alpha\neq\beta$. For each $\alpha$ let $\mu_{\alpha}$ be the measure on $\mathbb{R}$ whose Fourier-Stieltjes transform agrees on $\mathbb{Z}\subset\mathbb{R}$ with $\nu_{\alpha}$ and is linear in the gaps. Then $\{\mu_{\alpha}\}_{\alpha\in\mathbb{R}}$ is a set of independent power Hermitian probability measures in $M_{0}(\mathbb{R})$ (see appendix A.7 in \cite{grmc}). We have to verify that $\mu_{\alpha}\ast\mu_{\beta}\in L^{1}(\mathbb{R})$ for $\alpha\neq\beta$. Let $L:M(\mathbb{R})\rightarrow M(\mathbb{T})$ be a mapping defined by the quotient homomorphism of groups, $P:\mathbb{R}\rightarrow \mathbb{R}/2\pi\mathbb{Z}=\mathbb{T}$. Then $L$ sends positive measures to positive measures and positive singular measures to singular measures. Let us justify the second fact. If a positive measure $\rho$ on the real line assigns non-zero value to a Lebesgue null Borel set $E\subset\mathbb{R}$ then $L(\rho)(P(E))=\rho(E)>0$. Of course, $P(E)$ is a Lebesgue null set in $\mathbb{T}$, so $L$ does indeed take non-absolutely continous measures to non-absolutely continous measures. On the level of Fourier-Stieltjes transforms we have $\widehat{L(\rho)}=\widehat{\rho}|_{\mathbb{Z}}$. Therefore, the probability measure $L(\mu_{\alpha}\ast\mu_{\beta})$ has the Fourier-Stieltjes transform $\widehat{L(\mu_{\alpha}\ast\mu_{\beta})}=\widehat{\mu_{\alpha}}\widehat{\nu_{\beta}}$ with a compact support. Thus the measure $\mu_{\alpha}\ast\mu_{\beta}$, which is a preimage of $\nu_{\alpha}\ast\nu_{\beta}$, is absolutely continous proving the second part of the lemma.
\\
If $n=1$ we take the products of each of the continuum measures from the second part with the Haar measure on a fixed, relatively compact, open subset of $H$. If $n>1$, we write $G=\mathbb{R}\times(\mathbb{R}^{n-1}\times H)$ and consider the products of each of the continuum measures from the second part with a Haar measure on a fixed, relatively compact, open subset of $\mathbb{R}^{n-1}\times H$.
\\
The first part applied to $H$ gives continuum measures on $H$ with the required properties (we treat those measures as elements in $M(H)$). The Fourier-Stieltjes transforms of those measures are constant on each translate of the anihilator of $H$ (the group of all elements $\gamma$ from $\Gamma$ satisfying $\gamma(x)=1$ for all $x\in H$) by Theorem 2.7.1 in \cite{r}. But the anihilator of $H$ is equal to the dual group of $G/H$ (check Theorem 2.1.2 in \cite{r}) which is a discrete group showing that the anihilator of $H$ is a compact group. By this considerations the measures $\mu_{\alpha}$ are in $M_{0}(G)$ and the other properties of these measures as elements of $M(G)$ are obvious.
\end{proof}
From the last lemma we have an immediate corollary.
\begin{cor}\label{mrr}
Let $G$ be a non-discrete locally compact abelian group. Then $M_{0}(G)$ contains continuum independent power Hermitian probability measures $\{\mu_{\alpha}\}_{\alpha\in\mathbb{R}}$ such that $\mu_{\alpha}\ast\mu_{\beta}\in L^{1}(G)$ for all $\alpha\neq\beta$.
\end{cor}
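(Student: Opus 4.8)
The plan is to derive the corollary from Lemma~\ref{tech} by checking that every non-discrete locally compact abelian group $G$ is covered by at least one of the four cases listed there. The tool is the structure theorem for locally compact abelian groups: $G$ is topologically isomorphic to $\mathbb{R}^{n}\times G_{0}$, where $n\geq 0$ is an integer and $G_{0}$ is a locally compact abelian group possessing a compact open subgroup $H$ (see \cite{r}; if one only wishes to use the weaker form of that theorem, namely that $G$ merely \emph{contains} an open subgroup of this shape, one may instead work with measures supported on that open subgroup, which carries its own Haar measure as the restriction of the Haar measure of $G$ and whose annihilator in $\Gamma$ is compact, so that membership in $L^{1}$ and in $M_{0}$ is unaffected by the inclusion). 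A topological isomorphism of groups induces an isometric isomorphism of the measure algebras that carries $L^{1}$ onto $L^{1}$ and $M_{0}$ onto $M_{0}$, sends Hermitian measures to Hermitian measures, and preserves convolution powers and mutual singularity; hence it is enough to produce the family $\{\mu_{\alpha}\}_{\alpha\in\mathbb{R}}$ for the group $\mathbb{R}^{n}\times G_{0}$.

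I would first dispose of the case $n\geq 1$. Then $\mathbb{R}^{n}\times G_{0}$ is precisely of the shape treated in item~(3) of Lemma~\ref{tech}, with the auxiliary locally compact group there taken to be $G_{0}$ (in the degenerate subcase $n=1$ and $G_{0}$ trivial one falls back on item~(2), and for $n\geq 2$ and $G_{0}$ trivial one uses a relatively compact open subset of one of the remaining Euclidean coordinates, exactly as in the proof of Lemma~\ref{tech}). In every instance Lemma~\ref{tech} supplies the required continuum family.

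It remains to treat $n=0$, so that $G=G_{0}$ possesses a compact open subgroup $H$. The point here is that $H$ is necessarily infinite: an open subgroup is also closed, and a finite open subgroup would force $G$ to be discrete, contrary to hypothesis. Thus $H$ is an infinite compact open subgroup of $G$, we are in item~(4) of Lemma~\ref{tech}, and the family $\{\mu_{\alpha}\}_{\alpha\in\mathbb{R}}$ is again obtained. Since the alternatives ``$n\geq 1$'' and ``$n=0$ with $G_{0}$, hence $H$, non-discrete'' together exhaust the non-discrete locally compact abelian groups, the four cases of Lemma~\ref{tech} cover all of them and the corollary follows.

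The argument is routine once the structure theorem is invoked; the only points worth an explicit word are the transport of the relevant properties along the isomorphism $G\cong\mathbb{R}^{n}\times G_{0}$ and the remark that, when $n=0$, the complementary group $G_{0}$ --- and therefore its compact open subgroup --- is non-discrete. I do not expect a genuine obstacle.
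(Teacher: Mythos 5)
Your proposal is correct and follows essentially the same route as the paper: invoke the structure theorem $G\cong\mathbb{R}^{n}\times G_{0}$ and reduce to the cases of Lemma~\ref{tech}, using the $\mathbb{R}^{n}\times H$ case for $n\geq 1$ and the compact-open-subgroup case for $n=0$ (where the paper splits off compact $G$ via part (1), you instead absorb it into part (4) after correctly noting a finite open subgroup would force $G$ to be discrete). The extra care you take with degenerate $G_{0}$ and with transporting the properties along the isomorphism is fine but does not change the argument.
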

\begin{proof}
By the structure theorem for locally compact abelian groups (see for example \cite{r}), $G=\mathbb{R}^{n}\times H$ where $H$ has a compact open subgroup. If $n>0$ then the second part of Lemma \ref{tech} establishes the corollary. If $n=0$ then $G$ is either compact and we are able to apply the first part of Lemma \ref{tech} or $G$ has a compact open subgroup and we can use the fourth part.
\end{proof}
The proof of Theorem \ref{og} is a straightforward application of Corollary \ref{mrr} and Lemma \ref{kon}.
\section{An open problem}
Let us start with a definition.
\begin{de}\label{gen}
Let $A$ be a commutative Banach algebra with unit and let $S\subset\triangle(A)$. We say that $S$ generates the spectrum in $A$, if for every $x\in A$ we have
\begin{equation*}
\sigma(x)=\overline{\{\varphi(x):\varphi\in S\}}.
\end{equation*}
In addition, we say that the spectrum in $A$ is countably generated, if there exists a countable set $S\subset\triangle(A)$ which generates the spectrum in $A$.
\end{de}
Using Corollary \ref{ogw} we prove the following fact.
\begin{fakt}
Let $G$ be a non-discrete locally compact abelian group. Then the spectrum in $M(G)$ (also in $M_{0}(G)$) is not countably generated.
\end{fakt}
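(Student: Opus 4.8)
The plan is to argue by contradiction, reusing the family of measures produced in the proof of Theorem~\ref{og} rather than building anything new. Suppose there were a countable set $S\subset\triangle(M(G))$ generating the spectrum in $M(G)$ in the sense of Definition~\ref{gen}. By Corollary~\ref{mrr} there are continuum many Hermitian probability measures $\{\mu_\alpha\}_{\alpha\in\mathbb{R}}$ in $M_0(G)$ with independent powers and with $\mu_\alpha\ast\mu_\beta\in L^1(G)$ for $\alpha\neq\beta$. Exactly as in the second (direct) proof of Lemma~\ref{kon}, fix $z_0=\frac{1+i}{\sqrt{2}}$ and set $U_\alpha=\{\varphi\in\triangle(M(G)):|\widehat{\mu_\alpha}(\varphi)-z_0|<\frac{1}{10}\}$; these are open in $\triangle(M(G))$ and pairwise disjoint.

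Next I would use the generating hypothesis to force each $U_\alpha$ to meet $S$. Since $\mu_\alpha$ is a Hermitian probability measure with independent powers, Theorem~\ref{ips} gives $\sigma(\mu_\alpha)=\{z\in\mathbb{C}:|z|\le 1\}$, and $|z_0|=1$, so $z_0\in\sigma(\mu_\alpha)=\overline{\{\varphi(\mu_\alpha):\varphi\in S\}}$. Hence there is some $\varphi_\alpha\in S$ with $|\varphi_\alpha(\mu_\alpha)-z_0|<\frac{1}{10}$, i.e. $\varphi_\alpha\in S\cap U_\alpha$. Because the $U_\alpha$ are pairwise disjoint, the assignment $\alpha\mapsto\varphi_\alpha$ is injective, so it embeds $\mathbb{R}$ into the countable set $S$ — a contradiction. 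This settles the claim for $M(G)$.

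For $M_0(G)$, which is non-unital, one reads Definition~\ref{gen} through the unitization $M_0(G)^{+}$, whose Gelfand space is $\triangle(M_0(G))\cup\{\infty\}$, with $\infty$ the functional vanishing on $M_0(G)$. Since the $\mu_\alpha$ already lie in $M_0(G)$, the sets $U_\alpha$ may be taken inside $\triangle(M_0(G))$, and $\infty(\mu_\alpha)=0\neq z_0$ means the point at infinity never contributes to approximating $z_0$; thus the same counting argument applies verbatim.

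The only point one must be careful about is that the pairwise disjoint open sets exhibited in the proof of Theorem~\ref{og} are genuinely of the form $\widehat{\mu_\alpha}^{-1}(V)$ for a single fixed neighbourhood $V$ of a boundary point $z_0$ of the disc $\sigma(\mu_\alpha)$ — which is precisely what the direct proof of Lemma~\ref{kon} supplies. Granting that, the argument is a short cardinality count and there is no substantial obstacle beyond this bookkeeping.
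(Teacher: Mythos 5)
Your proposal is correct and follows essentially the same route as the paper: it reuses the measures $\{\mu_\alpha\}$ and the pairwise disjoint open sets $\{U_\alpha\}$ from the non-separability argument, and observes that a generating set must meet every $U_\alpha$ (since $z_0\in\sigma(\mu_\alpha)=\overline{\widehat{\mu_\alpha}(S)}$ forces some $\varphi\in S$ into $U_\alpha$), hence must be uncountable. You merely spell out the step the paper calls ``obvious'' and add the routine unitization remark for $M_0(G)$.
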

\begin{proof}
Let $S\subset\triangle(M(G))$ be any set which generates the spectrum in $M(G)$ and let us consider the uncountable family of measures $\{\mu_{\alpha}\}_{\alpha\in\mathbb{R}}\subset M(G)$ constructed in the proof of Theorem $\ref{ogw}$ together with an uncountable family of pairwise disjoint open sets $\{U_{\alpha}\}_{\alpha\in\mathbb{R}}\subset\triangle(M(G))$ used in the same proof. Then it is obvious that $S\cap U_{\alpha}\neq\emptyset$ for every $\alpha\in\mathbb{R}$ (since otherwise $\sigma(\mu_{\alpha})\neq\overline{\widehat{\mu_{\alpha}}(S)}$ contradicting the definition of $S$) and so the set $S$ has to be uncountable.
\end{proof}
It is clear that if a commutative, unital Banach algebra $A$ has separable Gelfand space then the spectrum in $A$ is countably generated. However, the reverse implication seems to be an open problem.
\begin{pro}
Let $A$ be commutative Banach algebra with unit and assume that the spectrum in $A$ is countably generated. Does it follow that $\triangle(A)$ is separable?
\end{pro}
Let us add two remarks concerning the last question.
\\
For the measure algebra on the compact group, proving that the Gelfand space is non-separable appears to be at the same level of difficulty as proving that the spectrum in this algebra is not countably generated. Nonetheless, it is not the case in general, which we can realize easily considering the algebra $H^{\infty}(\mathbb{D})$ of bounded holomorphic functions on the unit disc. In this example, the spectrum is countably generated and the proof of this fact is trivial but quite the opposite, the separability of $\triangle(H^{\infty}(\mathbb{D}))$ is quite close to the content of the famous Corona theorem of Carleson.
\\
It is worth-noting that the discussed problem is solved for some important classes of commutative Banach algebras. Indeed, let us take a commutative, unital Banach algebra $A$ satisfying $\mathfrak{M}(A)=\partial(A)$ (the last abbreviation stands for the Shilov boundary of $A$, see \cite{kan} or \cite{ż} for the definition) and assume that the spectrum in $A$ is generated by a countable set $S\subset\mathfrak{M}(A)$. Towards the contradiction suppose that $\overline{S}\neq\mathfrak{M}(A)$. Then, there exists $\varphi\in\mathfrak{M}(A)=\partial(A)$ and an open set $U\ni\varphi$ with the property $U\cap\overline{S}=\emptyset$. Recalling that the points in $\partial(A)$ have the following so-called peak point property (for a proof, consult \cite{kan} or \cite{ż}) i.e. there exists $x\in A$ and open subset $V\ni\varphi$ of $U$ such that $\|\widehat{x}|_{V}\|_{\infty}>\|\widehat{x}|_{X\setminus V}\|_{\infty}$, we obtain a contradiction because $\overline{S}\subset X\setminus U\subset X\setminus V$ and so $\|\widehat{x}|_{X\setminus U}\|_{\infty}=r(x)$. In particular, since every commutative, symmetric Banach$^{\ast}$ algebra $A$ satisfies $\mathfrak{M}(A)=\partial(A)$ and also every commutative, regular Banach algebra has this property, we have a broad collection of significant examples for which our problem has an affirmative answer. For arbitrary commutative Banach algebra the above argument shows that the Shilov boundary is included in the closure of any set which generates the spectrum in this algebra.

\end{document}